\newtheorem{theorem}{Theorem}
\newtheorem{corollary}[theorem]{Corollary}
\newtheorem{lemma}[theorem]{Lemma}
\newenvironment{proof}[1][Proof]{\noindent\textbf{#1.} }{\ \rule{0.5em}{0.5em}}
\begin{document}

\title{On Koksma-Hlawka inequality}
\author{L. Brandolini \and L. Colzani \and G. Gigante \and G. Travaglini}
\date{}
\maketitle

\begin{abstract}
The classical Koksma Hlawka inequality does not apply to functions with
simple discontinuities. Here we state a Koksma Hlawka type inequality which
applies to piecewise smooth functions $f\chi _{\Omega }$, with $f$ smooth
and $\Omega $ a Borel subset of $\left[ 0,1\right] ^{d}$:

\begin{equation*}
\left\vert N^{-1}\sum_{j=1}^{N}\left( f\chi _{\Omega }\right) \left(
x_{j}\right) -\int_{\Omega }f(x)dx\right\vert \leq \mathcal{D}\left( \Omega
,x_{j}\right) \mathcal{V}(f),
\end{equation*}%
where $\mathcal{D}\left( \Omega ,x_{j}\right) $ is the discrepancy
\begin{equation*}
\mathcal{D}\left( \Omega ,x_{j}\right) =2^{d}\sup\limits_{I\subseteq \left[
0,1\right] ^{d}}\left\{ \left\vert N^{-1}\sum_{j=1}^{N}\chi _{\Omega \cap
I}\left( x_{j}\right) -\left\vert \Omega \cap I\right\vert \right\vert
\right\} ,
\end{equation*}%
the supremum is over all $d$-dimensional intervals, and $\mathcal{V}(f)$ is
the total variation
\begin{equation*}
\mathcal{V}(f)=\sum_{\alpha \in \left\{ 0,1\right\} ^{d}}2^{d-\left\vert
\alpha \right\vert }\int_{\left[ 0,1\right] ^{d}}\left\vert \left( \frac{%
\partial }{\partial x}\right) ^{\alpha }f\left( x\right) \right\vert dx.
\end{equation*}%
We state similar results with variation and discrepancy measured by $L^{p}$
and $L^{q}$ norms, $1/p+1/q=1$, and we also give extensions to compact
manifolds.
\end{abstract}

\section{Introduction}

Koksma's inequality is a neat bound for the error in a numerical
integration:
\begin{equation*}
\left| N^{-1}\sum_{j=1}^{N}f\left( x_{j}\right) -\int_{0}^{1}f(x)dx\right|
\leq \mathcal{D}\left( x_{j}\right) \mathcal{V}\left( f\right) .
\end{equation*}

In this inequality $\mathcal{D}\left( x_{j}\right) $ is the discrepancy of
the points $0\leq x_{j}\leq 1$ and $\mathcal{V}\left( f\right) $ is the
total variation of the function $f$,
\begin{gather*}
\mathcal{D}\left( x_{j}\right) =\sup_{0\leq t\leq 1}\left\{ \left|
N^{-1}\sum_{j=1}^{N}\chi _{\left[ 0,t\right] }\left( x_{j}\right) -t\right|
\right\} , \\
\mathcal{V}\left( f\right) =\sup_{0=y_{0}<y_{1}<y_{2}<\ldots
<y_{n}=1}\left\{ \sum_{k=1}^{n}\left| f\left( y_{k}\right) -f\left(
y_{k-1}\right) \right| \right\} .
\end{gather*}

We may say that Koksma's inequality is a simple machine\ which turns the
discrepancy for a small family of functions, characteristic functions of
intervals, into the discrepancy for a larger family, functions of bounded
variation. The extension to several variables is a more delicate problem,
yet it is of some relevance in numerical analysis. See e.g. \cite{Hick},
\cite{Hla}, \cite{KN}, \cite{Mat}, \cite{Nie}, \cite{Z}. A classical
approach starts with the definitions of Vitali and Hardy Krause variations.
For a function $f$ on $\left[ 0,1\right] ^{d}$ and for every $d$ dimensional
interval $I$ in $\left[ 0,1\right] ^{d}$ with edges parallel to the axes,
let $\Delta \left( f,I\right) $ be an alternating sum of the values of $f$
at the vertices of $I$. The Vitali variation is
\begin{equation*}
V\left( f\right) =\sup_{R}\left\{ \sum_{I\in R}\left| \Delta \left(
f,I\right) \right| \right\} ,
\end{equation*}
where the supremum is over all finite partitions $R$ of $\left[ 0,1\right]
^{d}$ in $d$ dimensional intervals $I$. The Hardy Krause variation is
\begin{equation*}
\mathcal{V}\left( f\right) =\sum_{k}V_{k}\left( f\right) ,
\end{equation*}
where the sum is over the Vitali variations $V_{k}\left( f\right) $ of the
restrictions of $f$ to all faces of all dimensions of $\left[ 0,1\right]
^{d} $. The discrepancy of a finite point set $\left\{ x_{j}\right\}
_{j=1}^{N}$ in $\left[ 0,1\right] ^{d}$ is defined by
\begin{equation*}
\mathcal{D}\left( x_{j}\right) =\sup_{I}\left\{ \left|
N^{-1}\sum_{j=1}^{N}\chi _{I}\left( x_{j}\right) -\left| I\right| \right|
\right\} ,
\end{equation*}
where $I$ is an interval of the form $\left[ 0,t_{1}\right] \times \left[
0,t_{2}\right] \times \ldots \times \left[ 0,t_{d}\right] $ with $0\leq
t_{k}\leq 1$, and $\left| I\right| =t_{1}t_{2}\cdots t_{d}$ is its measure.
The classical Koksma Hlawka inequality states that if $f$ has bounded Hardy
Krause variation, then
\begin{equation*}
\left| N^{-1}\sum_{j=1}^{N}f\left( x_{j}\right) -\int_{\left[ 0,1\right]
^{d}}f(x)dx\right| \leq \mathcal{D}\left( x_{j}\right) \mathcal{V}\left(
f\right) .
\end{equation*}

The assumptions required in the one dimensional Koksma inequality are
satisfied by many familiar functions and are usually easy to verify. On the
contrary, the Hardy Krause condition in the Koksma Hlawka inequality seems
to be rather strict. It works well for smooth functions, but it cannot be
applied to most functions with simple discontinuities. For example, the
characteristic function of a convex polyhedron has bounded Hardy Krause
variation only if the polyhedron is a $d$ dimensional interval. For this and
other reasons, several variants of the Koksma Hlawka inequality have been
proposed. In particular, in \cite{Har10} the small family consists of
characteristic functions of convex sets and the large family is given by
functions with super level sets which are differences of finite unions of
convex sets. See also \cite[p.162]{Har07}, \cite{Mat}, \cite{Tra}. Finally,
a general and systematic approach to Koksma Hlawka inequalities is via
reproducing kernel Hilbert spaces. See e.g. \cite{AZ} and \cite{Hick}.
However, in some of these approaches the geometric meaning of the
discrepancy is somehow hidden. The aim of this paper is to state some Koksma
Hlawka inequalities with explicit geometric discrepancies, and which apply
to piecewise smooth functions, that is smooth functions $f$ restricted to
arbitrary Borel sets $\Omega $. In one version of this inequality the error
in the numerical integration of $f\chi _{\Omega }$ is controlled by a
variation of $f$ defined in terms of derivatives, times the discrepancy of
the intersection of $\Omega $ with translates of intervals $I$ with edges
parallel to the axes. In another version the discrepancy is with respect to
the intersection of $\Omega $ with cubes, and in a further version the
discrepancy is with respect to the intersection of $\Omega $ with balls.
These results are first stated and proved when the underlying space is a
torus, then they are extended to compact manifolds, in particular spheres.

\section{Koksma Hlawka inequalities on a torus}

In what follows we are going to consider functions, measures, distributions,
on the torus $\mathbb{T}^{d}=\mathbb{R}^{d}/\mathbb{Z}^{d}=\left[ 0,1\right)
^{d}$, that is functions, measures, distributions on $\mathbb{R}^{d}$ which
are $\mathbb{Z}^{d}$ periodic.

\begin{theorem}
\label{1}If $f$ and $\mu $ are an integrable function and a finite measure
on $\mathbb{T}^{d}$ respectively, if $\Omega $ is a bounded Borel subset of $%
\mathbb{R}^{d}$, and if $1\leq p,q\leq +\infty $ with $1/p+1/q=1$, then
\begin{equation*}
\left\vert \int_{\Omega }f(x)\overline{d\mu (x)}\right\vert \leq \mathcal{D}%
_{q}\left( \Omega ,\mu \right) \mathcal{V}_{p}(f),
\end{equation*}%
where $\mathcal{D}_{q}\left( \Omega ,\mu \right) $ is the $L^{q}$
discrepancy
\begin{equation*}
\mathcal{D}_{q}\left( \Omega ,\mu \right) =\int_{\left[ 0,1\right]
^{d}}\left\{ \int_{\mathbb{T}^{d}}\left\vert \sum_{n\in \mathbb{Z}^{d}}\mu
\left( \left( x+n-I\left( t\right) \right) \cap \Omega \right) \right\vert
^{q}dx\right\} ^{1/q}dt,
\end{equation*}%
with $I(t)=\left[ 0,t_{1}\right] \times \left[ 0,t_{2}\right] \times \ldots
\times \left[ 0,t_{d}\right] $, $0\leq t_{k}\leq 1$, and $\mathcal{V}_{p}(f)$
is the $L^{p}$ total variation
\begin{equation*}
\mathcal{V}_{p}(f)=\sum_{\alpha \in \left\{ 0,1\right\} ^{d}}2^{d-\left\vert
\alpha \right\vert }\left\{ \int_{\mathbb{T}^{d}}\left\vert \left( \frac{%
\partial }{\partial x}\right) ^{\alpha }f\left( x\right) \right\vert
^{p}dx\right\} ^{1/p}.
\end{equation*}
\end{theorem}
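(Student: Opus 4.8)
The plan is to reduce the bilinear inequality to a Fourier-analytic identity on the torus and then apply Hölder's inequality. First I would write $\chi_\Omega f$ in terms of its "antiderivative": the key observation is that for a smooth periodic $f$, one has the representation
\begin{equation*}
f(x)=\sum_{\alpha\in\{0,1\}^d} 2^{d-|\alpha|}\int_{[0,1]^d} \left(\frac{\partial}{\partial x}\right)^{\alpha} f(y)\, K_\alpha(x,y)\,dy
\end{equation*}
for suitable kernels $K_\alpha$ built from one-dimensional pieces; in each coordinate the identity is the elementary $f(x)=\frac12\int_0^1 f(y)\,dy+\frac12\int_0^1 f(y)\,dy+\int_0^1 f'(y)(\text{something})\,dy$, i.e. integration by parts against the $1$-periodic sawtooth-type function, with the factor $2^{d-|\alpha|}$ absorbing the averaging terms. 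The upshot is that each kernel, restricted to the relevant face, is (up to sign and translation) the indicator of a box of the form $x+n-I(t)$ for $n\in\mathbb Z^d$, integrated over $t$.

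Next I would substitute this representation into $\int_\Omega f\,\overline{d\mu}$ and interchange the order of integration (justified by Fubini, using that $\mu$ is finite, $\Omega$ bounded, and the derivatives of $f$ integrable). This turns the left-hand side into
\begin{equation*}
\sum_{\alpha\in\{0,1\}^d} 2^{d-|\alpha|}\int_{[0,1]^d}\left\{\int_{\mathbb T^d}\left(\frac{\partial}{\partial x}\right)^{\alpha}f(x)\,\overline{\sum_{n\in\mathbb Z^d}\mu\big((x+n-I(t))\cap\Omega\big)}\,dx\right\}dt,
\end{equation*}
because pairing $\mu$ with the translated-box kernel and summing over the lattice produces exactly the quantity $\sum_{n}\mu((x+n-I(t))\cap\Omega)$ appearing in $\mathcal D_q$. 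At this point the estimate is immediate: apply Hölder's inequality in $x$ with exponents $p$ and $q$ to each inner integral, bounding it by $\|(\partial/\partial x)^\alpha f\|_{L^p(\mathbb T^d)}$ times the $L^q$ norm of the lattice sum; then integrate in $t$ over $[0,1]^d$ and sum over $\alpha$, recognizing the two factors as the summands of $\mathcal V_p(f)$ and (after pulling the $t$-integral inside, which is legitimate since we only need an upper bound and can use Minkowski's integral inequality or simply the triangle inequality for the $t$-integral) the corresponding piece of $\mathcal D_q(\Omega,\mu)$.

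The main obstacle, and the step I would spend the most care on, is establishing the representation formula with the correct identification of each kernel as a translated box indicator: one must check that differentiating in only the coordinates indexed by $\alpha$ and averaging in the others produces, after the periodization, precisely $\sum_{n\in\mathbb Z^d}\chi_{x+n-I(t)}$ on the nose — including getting the signs, the endpoints, and the factor $2^{d-|\alpha|}$ right, and verifying that the formula is valid for all $x$ (not just a.e.) when $f$ is smooth, so that it still makes sense after restricting to the Borel set $\Omega$. A secondary technical point is the density/approximation argument allowing one to pass from smooth $f$ to general $f$ with $\mathcal V_p(f)<\infty$, and the interchange of the $t$-integration with the $L^q$ norm; both are routine once the core identity is in place, the latter handled by Minkowski's integral inequality (note $\mathcal D_q$ is defined with the $t$-integral outside, which is the "easy" direction for an upper bound).
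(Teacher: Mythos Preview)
Your overall architecture --- a representation formula for $f$ followed by H\"older in $x$ and then the $t$-integral --- is exactly the paper's strategy. But the displayed identity you arrive at,
\[
\int_{\Omega} f\,\overline{d\mu}
=\sum_{\alpha\in\{0,1\}^d} (\pm)2^{d-|\alpha|}\int_{[0,1]^d}\int_{\mathbb T^d}\partial^{\alpha}f(x)\,\overline{\sum_{n}\mu\bigl((x+n-I(t))\cap\Omega\bigr)}\,dx\,dt,
\]
is \emph{false}: the same discrepancy kernel cannot sit against every $\partial^\alpha f$. In $d=1$ with $f(x)=e^{2\pi i x}$, $\Omega=[0,1)$, $\mu=\delta_0$, the left side equals $1$, while your right side equals $i/\pi\mp 1$. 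The error is in the ``each kernel is the indicator of a box $x+n-I(t)$'' step: the one-dimensional building block in a coordinate where you do \emph{not} differentiate is $\int_0^1 f(y_k)\,dy_k$ (average), not $f(x_k)$, so the $\alpha$-th kernel must carry an extra average over the complementary coordinates $\beta=(1,\dots,1)-\alpha$.

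The paper fixes this by working on the Fourier side: with $\varphi(n)=\prod_k\bigl(2\delta(n_k)-2\pi i n_k\bigr)$ one has the single factorization $\int_\Omega f\,\overline{d\mu}=\int_{\mathbb T^d}\mathfrak D f\cdot\overline{g\ast\nu}$, where $g\ast\nu(x)=\int_{[0,1]^d}\sum_n\mu((x+n-I(t))\cap\Omega)\,dt$ and
\[
\mathfrak D f(x)=\sum_{\alpha}(-1)^{|\alpha|}2^{d-|\alpha|}\int_{[0,1]^{|\beta|}}\partial^\alpha f(x+y^\beta)\,dy^\beta.
\]
The crucial $\delta(n_k)$ in $\varphi$ is exactly the averaging you are missing. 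H\"older is then applied \emph{once} to the single pairing; the bound $\|\mathfrak D f\|_p\le\mathcal V_p(f)$ follows because the $L^p$ norm of the $\beta$-average of $\partial^\alpha f$ is at most $\|\partial^\alpha f\|_p$, and $\|g\ast\nu\|_q\le\mathcal D_q(\Omega,\mu)$ by Minkowski in $t$, just as you said. So your plan becomes correct once the lower-order terms carry the averaging; getting that right is precisely the content of the paper's Lemmas~3 and~4.
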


The variation of the function and the discrepancy of the measure in the
statement of the theorem increase with $p$ and $q$, hence a gain in $q$
corresponds to a loss in $p$. When $\Omega $ is contained in $\left[ 0,1%
\right] ^{d}$ the $L^{\infty }$ discrepancy is dominated as follows:
\begin{equation*}
\mathcal{D}_{\infty }\left( \Omega ,\mu \right) \leq
2^{d}\sup\limits_{I\subseteq \left[ 0,1\right] ^{d}}\left\{ \left\vert \mu
\left( I\cap \Omega \right) \right\vert \right\} .
\end{equation*}

This reflects the difference between the discrepancy in a torus and the one
in a cube, and it is due to the fact that an interval in $\mathbb{T}^{d}$
can be split into at most $2^{d}$ intervals in $\left[ 0,1\right] ^{d}$. In
the above formulas with $p=1$ the derivatives can be measures, and in this
case the norms $\int_{\mathbb{T}^{d}}\left| \left( \partial /\partial
x\right) ^{\alpha }f\left( x\right) \right| dx$ denote the total variations.
Observe that if $p=1$ then less than $d$ integrable derivatives are not
enough to guarantee the boundedness of the functions. Hence, if the measure $%
\mu $ is concentrated on the singularities of the function $f$, the integral
$\int_{\Omega }f(x)d\mu (x)$ may be not defined. In the classical Koksma
Hlawka inequality, and in most of discrepancy theory, the measure $\mu $ is
the difference between masses $\delta _{x_{j}}$ concentrated at the points $%
x_{j}$ and the uniformly distributed measure $dx$:
\begin{equation*}
d\mu =N^{-1}\sum_{j=1}^{N}\delta _{x_{j}}-dx.
\end{equation*}

For this measure and when $p=1$ and $\Omega =\left[ 0,1\right] ^{d}$ the
above theorem is essentially equivalent to the classical Koksma Hlawka
inequality with respect to the Hardy Krause variation. The proof of the
theorem can be split into a sequence of easy lemmas. The first one can be
seen as a Fourier analog of a multidimensional integration by parts in \cite%
{Z}. See also the examples in \cite{AZ}.

\begin{lemma}
\label{2}Let $\varphi $ be a non vanishing complex sequence on $\mathbb{Z}%
^{d}$, and assume that both $\varphi $ and $1/\varphi $ have tempered growth
in $\mathbb{Z}^{d}$. Also let $f$ be an integrable function on $\mathbb{T}%
^{d}$. Define
\begin{gather*}
g\left( x\right) =\sum_{n\in \mathbb{Z}^{d}}\overline{\varphi \left(
n\right) ^{-1}}e^{2\pi in\cdot x}, \\
\mathfrak{D}f\left( x\right) =\sum_{n\in \mathbb{Z}^{d}}\varphi \left(
n\right) \widehat{f}\left( n\right) e^{2\pi in\cdot x}.
\end{gather*}%
Finally, let $\mu $ be a finite measure on $\mathbb{T}^{d}$. Then, the
following identity holds:
\begin{equation*}
\int_{\mathbb{T}^{d}}f(x)\overline{d\mu (x)}=\int_{\mathbb{T}^{d}}\mathfrak{D%
}f\left( x\right) \overline{g\ast \mu \left( x\right) }dx.
\end{equation*}
\end{lemma}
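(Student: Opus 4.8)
The plan is to verify the identity by expanding both sides in Fourier series and matching coefficients, the point being that $\mathfrak{D}$ and convolution with $g$ are, on the Fourier side, multiplication by $\varphi(n)$ and by $\overline{\varphi(n)^{-1}}$ respectively, so the two operations are formally adjoint and their product is the identity. Concretely, first I would recall that for a finite measure $\mu$ on $\mathbb{T}^d$ its Fourier coefficients $\widehat{\mu}(n)=\int_{\mathbb{T}^d}e^{-2\pi in\cdot x}\,d\mu(x)$ are bounded, hence of tempered growth, and that $g\ast\mu$ then has Fourier coefficients $\overline{\varphi(n)^{-1}}\,\widehat{\mu}(n)$ (using $\widehat{g}(n)=\overline{\varphi(n)^{-1}}$), which is a tempered sequence since $1/\varphi$ has tempered growth; thus $g\ast\mu$ is a well-defined distribution, in fact pairs against smooth functions. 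Similarly $\mathfrak{D}f$ has Fourier coefficients $\varphi(n)\widehat{f}(n)$, a tempered sequence, hence is a distribution on $\mathbb{T}^d$.

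Next I would compute the right-hand side by Parseval: $\int_{\mathbb{T}^d}\mathfrak{D}f(x)\,\overline{g\ast\mu(x)}\,dx=\sum_{n\in\mathbb{Z}^d}\widehat{\mathfrak{D}f}(n)\,\overline{\widehat{g\ast\mu}(n)}=\sum_{n}\varphi(n)\widehat{f}(n)\cdot\overline{\overline{\varphi(n)^{-1}}\,\widehat{\mu}(n)}=\sum_{n}\varphi(n)\varphi(n)^{-1}\widehat{f}(n)\,\overline{\widehat{\mu}(n)}=\sum_{n}\widehat{f}(n)\,\overline{\widehat{\mu}(n)}$, and the last sum is exactly $\int_{\mathbb{T}^d}f(x)\,\overline{d\mu(x)}$ by Parseval again. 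The cancellation $\varphi(n)\varphi(n)^{-1}=1$ is where the hypothesis that $\varphi$ never vanishes is used, and the tempered-growth hypotheses on $\varphi$ and $1/\varphi$ are exactly what make every series here a legitimate pairing of a tempered distribution against a smooth (or at least sufficiently regular) function, so that Parseval's formula applies term by term.

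The main obstacle, and the only place requiring care, is justifying these manipulations at the stated level of generality: $f$ is merely integrable, so $\widehat{f}(n)$ is bounded but not necessarily summable, and $\mathfrak{D}f$ is only a distribution, not a function; likewise $g\ast\mu$ is a distribution. One clean way around this is to interpret the pairing $\int\mathfrak{D}f\cdot\overline{g\ast\mu}$ via the distributional Parseval identity $\langle S,\overline{T}\rangle=\sum_n\widehat{S}(n)\overline{\widehat{T}(n)}$, valid whenever one of $S,T$ has tempered Fourier coefficients and the other has rapidly decreasing ones — but here \emph{both} are only tempered, so instead I would regularize: replace $f$ by its Fej\'er (or Poisson) means $f_r$, whose Fourier series are finite (or absolutely convergent), run the exact finite computation above with $f_r$ in place of $f$, obtaining $\int f_r\,\overline{d\mu}=\int\mathfrak{D}f_r\cdot\overline{g\ast\mu}\,dx$, and then let $r\to\infty$. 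On the left $\int f_r\,\overline{d\mu}\to\int f\,\overline{d\mu}$ since $f_r\to f$ in $L^1$ and uniformly-bounded-approximation arguments apply to the finite measure $\mu$ (alternatively $\widehat{f_r}(n)\to\widehat{f}(n)$ boundedly and dominated convergence handles $\sum_n\widehat{f_r}(n)\overline{\widehat\mu(n)}$ when the tail is controlled); on the right, the Fourier coefficients $\varphi(n)\widehat{f_r}(n)$ converge to $\varphi(n)\widehat{f}(n)$ and, paired against the fixed smooth-side object $\overline{g\ast\mu}$ — whose coefficients decay no slower than $\widehat\mu$ is bounded, so one really does need $1/\varphi$ tempered together with a summation-by-parts or Abel-summation argument — the limit is the desired pairing. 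Once this regularization scaffolding is in place the identity follows, and I would state it with the understanding that when the Fourier coefficients of $\mathfrak{D}f$ and of $g\ast\mu$ are not jointly summable the right-hand integral is to be read as this regularized (Abel-summed) pairing, which is precisely how it will be used in the subsequent lemmas.
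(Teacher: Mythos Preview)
Your approach is essentially identical to the paper's: both compute via Parseval, writing $\int f\,\overline{d\mu}=\sum_n\widehat f(n)\overline{\widehat\mu(n)}=\sum_n(\varphi(n)\widehat f(n))(\varphi(n)^{-1}\overline{\widehat\mu(n)})=\int\mathfrak{D}f\cdot\overline{g\ast\mu}$. The paper is considerably more terse, simply noting that the tempered-growth hypotheses make $\mathfrak{D}f$ and $g$ well defined as distributions and then writing down the chain of equalities without further justification; your additional discussion of Fej\'er/Abel regularization to make the Parseval pairings rigorous at the stated generality is a reasonable elaboration but not something the paper itself supplies.
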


\begin{proof}
We are using the notation $\widehat{f}\left( n\right) =\int_{\mathbb{T}%
^{d}}f(x)e^{-2\pi in\cdot x}dx$ for the Fourier transform and $g\ast \mu
(x)=\int_{\mathbb{T}^{d}}g(x-y)d\mu (y)$ for the convolution, and we are
applying these operators also to distributions. In particular, the
assumptions on the growth of $\varphi $ and $1/\varphi $ guarantee that both
$\mathfrak{D}f$ and $g$ are well defined as tempered distributions. Moreover
\begin{gather*}
\int_{\mathbb{T}^{d}}f(x)\overline{d\mu (x)}=\sum_{n\in \mathbb{Z}^{d}}%
\widehat{f}\left( n\right) \overline{\widehat{\mu }\left( n\right) } \\
=\sum_{n\in \mathbb{Z}^{d}}\left( \varphi \left( n\right) \widehat{f}\left(
n\right) \right) \left( \varphi \left( n\right) ^{-1}\overline{\widehat{\mu }%
\left( n\right) }\right) =\int_{\mathbb{T}^{d}}\mathfrak{D}f\left( x\right)
\overline{g\ast \mu \left( x\right) }dx.
\end{gather*}
\end{proof}

Suitable choices of $\varphi $ and $\mu $ will make the above abstract lemma
more concrete and interesting. In particular, $\varphi $ will be the Fourier
transform of a differential integral operator and $1/\varphi $ the Fourier
transform of a superposition of characteristic functions.

\begin{lemma}
\label{3}Let the function $h$ on $\mathbb{R}^{d}$ be the superposition of
all intervals $I(t)=\left[ 0,t_{1}\right] \times \left[ 0,t_{2}\right]
\times \ldots \times \left[ 0,t_{d}\right] $ with $0\leq t_{k}\leq 1$, and
let $g(x)$\textit{\ be the }$\mathbb{Z}^{d}$ \textit{periodization of }$h$,
\begin{equation*}
h(x)=\int_{\left[ 0,1\right] ^{d}}\chi _{I\left( t\right) }\left( x\right)
dt,\ \ \ g\left( x\right) =\sum_{n\in \mathbb{Z}^{d}}h\left( x+n\right) .
\end{equation*}%
Then the function $g$ has Fourier expansion
\begin{equation*}
g\left( x\right) =\sum_{n\in \mathbb{Z}^{d}}\left( \prod_{k=1}^{d}\left(
2\delta \left( n_{k}\right) +2\pi in_{k}\right) ^{-1}\right) e^{2\pi in\cdot
x},
\end{equation*}%
where $\delta \left( 0\right) =1$ and $\delta \left( j\right) =0$ for $j\neq
0$.
\end{lemma}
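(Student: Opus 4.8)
The plan is to reduce the whole computation to one dimension by exploiting the product structure of the intervals $I(t)$. First I would evaluate $h$ explicitly. Since $\chi_{I(t)}(x)=\prod_{k=1}^{d}\chi_{[0,t_{k}]}(x_{k})$, Fubini's theorem gives $h(x)=\prod_{k=1}^{d}\psi(x_{k})$, where $\psi(s)=\int_{0}^{1}\chi_{[0,t]}(s)\,dt=(1-s)\chi_{[0,1]}(s)$. Hence the periodization factors too: $g(x)=\sum_{n\in\mathbb{Z}^{d}}\prod_{k=1}^{d}\psi(x_{k}+n_{k})=\prod_{k=1}^{d}G(x_{k})$, with $G(s)=\sum_{m\in\mathbb{Z}}\psi(s+m)$; the interchange of sum and product is harmless because for each fixed $x$ only finitely many terms are nonzero.

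Next I would compute the Fourier coefficients of the one-dimensional function $G$. On $[0,1)$ only the term $m=0$ in $\sum_{m}\psi(s+m)$ survives, so $G(s)=1-s$ there and $\widehat{G}(m)=\int_{0}^{1}(1-s)e^{-2\pi ims}\,ds$. An elementary computation (integration by parts, or the substitution $s\mapsto 1-s$ together with $e^{-2\pi im}=1$) gives $\widehat{G}(0)=1/2$ and $\widehat{G}(m)=(2\pi im)^{-1}$ for $m\neq 0$; both cases are captured by the single formula $\widehat{G}(m)=(2\delta(m)+2\pi im)^{-1}$.

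Finally, since the Fourier transform of a product of functions of separate variables is the product of the one-dimensional Fourier transforms, the Fourier coefficients of $g(x)=\prod_{k}G(x_{k})$ are $\widehat{g}(n)=\prod_{k=1}^{d}\widehat{G}(n_{k})=\prod_{k=1}^{d}(2\delta(n_{k})+2\pi in_{k})^{-1}$, which is the asserted expansion (understood, as usual, as an identity in $L^{2}(\mathbb{T}^{d})$, or distributionally). Alternatively one could skip the explicit formula for $h$: the Fourier coefficients of the periodization $g$ coincide with the Euclidean Fourier transform $\widehat{h}$ evaluated on $\mathbb{Z}^{d}$, and $\widehat{h}(\xi)=\int_{[0,1]^{d}}\widehat{\chi_{I(t)}}(\xi)\,dt=\prod_{k}\int_{0}^{1}\frac{1-e^{-2\pi it_{k}\xi_{k}}}{2\pi i\xi_{k}}\,dt_{k}$, which at $\xi=n\in\mathbb{Z}^{d}$ collapses to the same product since $e^{-2\pi in_{k}}=1$. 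I do not expect a genuine obstacle here; the only point needing a little care is the bookkeeping at frequencies with some $n_{k}=0$, which is exactly what the symbols $\delta(n_{k})$ are designed to absorb.
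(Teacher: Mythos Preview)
Your argument is correct and follows essentially the same route as the paper: both exploit the product structure to write $h(x)=\prod_{k}(1-x_{k})\chi_{[0,1]}(x_{k})$ and then reduce the Fourier computation to the one-dimensional integral $\int_{0}^{1}(1-s)e^{-2\pi i m s}\,ds$. Your extra remarks on the factorization of the periodization and the alternative via $\widehat{h}$ are fine elaborations but not needed.
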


\begin{proof}
Observe that
\begin{equation*}
h(x)=\prod_{k=1}^{d}\int_{0}^{1}\chi _{\left[ 0,t_{k}\right] }\left(
x_{k}\right) dt_{k}=\prod_{k=1}^{d}\left( 1-x_{k}\right) \chi _{\left[ 0,1%
\right] }\left( x_{k}\right) .
\end{equation*}

Then compute the Fourier coefficients,
\begin{gather*}
\widehat{g}\left( n\right) =\int_{\mathbb{T}^{d}}g(x)e^{-2\pi in\cdot x}dx \\
=\prod_{k=1}^{d}\left( \int_{0}^{1}\left( 1-x_{k}\right) e^{-2\pi
in_{k}x_{k}}dx_{k}\right) =\prod_{k=1}^{d}\left( 2\delta \left( n_{k}\right)
+2\pi in_{k}\right) ^{-1}.
\end{gather*}
\end{proof}

\begin{lemma}
\label{4}If $f$ is a smooth function on $\mathbb{T}^{d}$, then
\begin{gather*}
\mathfrak{D}f(x)=\sum_{n\in \mathbb{Z}^{d}}\left( \prod_{k=1}^{d}\left(
2\delta \left( n_{k}\right) -2\pi in_{k}\right) \right) \widehat{f}\left(
n\right) \exp \left( 2\pi inx\right) \\
=\sum_{\alpha ,\beta \in \left\{ 0,1\right\} ^{d},\;\alpha +\beta =\left(
1,\ldots ,1\right) }\left( -\right) ^{\left\vert \alpha \right\vert
}2^{\left\vert \beta \right\vert }\int_{\left[ 0,1\right] ^{\left\vert \beta
\right\vert }}\left( \frac{\partial }{\partial x}\right) ^{\alpha }f\left(
x+y^{\beta }\right) dy^{\beta }.
\end{gather*}%
We are using the notation $\left( \partial /\partial x\right) ^{\alpha
}=\left( \partial /\partial x_{1}\right) ^{\alpha _{1}}...\left( \partial
/\partial x_{d}\right) ^{\alpha _{d}}$ and $y^{\beta
}=\sum_{j=1}^{d}y_{j}^{\beta _{j}}e_{j}$, where $\left\{ e_{j}\right\}
_{j=1}^{d}$ is the canonical basis of $\mathbb{R}^{d}$ and $dy^{\beta
}=dy_{1}^{\beta _{1}}\ldots dy_{d}^{\beta _{d}}$.
\end{lemma}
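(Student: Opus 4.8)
The plan is to first identify the multiplier $\varphi$ that is tacitly in force in Lemma \ref{4}, and then to expand a product of $d$ factors. The operator $\mathfrak{D}$ here is the one of Lemma \ref{2} with $\varphi$ chosen so that the associated function $g$ equals the periodization $g$ of Lemma \ref{3}; comparing the two expressions for its Fourier coefficients, namely $\widehat{g}(n)=\overline{\varphi(n)^{-1}}$ (read off from $g(x)=\sum_{n}\overline{\varphi(n)^{-1}}e^{2\pi in\cdot x}$) and $\widehat{g}(n)=\prod_{k=1}^{d}(2\delta(n_k)+2\pi in_k)^{-1}$ (Lemma \ref{3}), and taking reciprocals and conjugates (note $\delta(n_k)\in\mathbb{R}$ and $\overline{2\pi in_k}=-2\pi in_k$), we get $\varphi(n)=\prod_{k=1}^{d}(2\delta(n_k)-2\pi in_k)$. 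Substituting into $\mathfrak{D}f(x)=\sum_{n}\varphi(n)\widehat{f}(n)e^{2\pi in\cdot x}$ is precisely the first displayed identity. Since $f$ is smooth, $\widehat{f}(n)$ decays faster than any power of $|n|$, so this series converges absolutely and all the rearrangements below are legitimate.

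For the second identity I would expand the product by choosing, in each factor $2\delta(n_k)-2\pi in_k$, either the summand $-2\pi in_k$ or the summand $2\delta(n_k)$. Recording the choices by $\alpha\in\{0,1\}^{d}$, where $\alpha_k=1$ means the first alternative was picked, and setting $\beta=(1,\ldots,1)-\alpha$, this gives
\[
\prod_{k=1}^{d}\bigl(2\delta(n_k)-2\pi in_k\bigr)=\sum_{\alpha+\beta=(1,\ldots,1)}(-1)^{|\alpha|}2^{|\beta|}\Bigl(\prod_{\alpha_k=1}2\pi in_k\Bigr)\Bigl(\prod_{\beta_k=1}\delta(n_k)\Bigr),
\]
the products being over those $k$ with $\alpha_k=1$, resp.\ $\beta_k=1$. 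Now I read off what the two surviving factors do to $\widehat{f}(n)$: multiplication by $\prod_{\alpha_k=1}2\pi in_k$ yields the Fourier coefficients of $(\partial/\partial x)^{\alpha}f$, and multiplication by $\prod_{\beta_k=1}\delta(n_k)$ restricts the summation to the $n$ with $n_k=0$ whenever $\beta_k=1$. That last projection is nothing but averaging over the corresponding variables, since $\int_{[0,1]^{|\beta|}}e^{2\pi in\cdot(x+y^{\beta})}\,dy^{\beta}=e^{2\pi in\cdot x}\prod_{\beta_k=1}\delta(n_k)$ by the elementary value of $\int_{0}^{1}e^{2\pi imt}\,dt$.

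Assembling these observations, the term indexed by a fixed decomposition $(\alpha,\beta)$ contributes
\[
(-1)^{|\alpha|}2^{|\beta|}\sum_{n\in\mathbb{Z}^{d}}\Bigl(\prod_{\beta_k=1}\delta(n_k)\Bigr)\Bigl(\prod_{\alpha_k=1}2\pi in_k\Bigr)\widehat{f}(n)\,e^{2\pi in\cdot x}=(-1)^{|\alpha|}2^{|\beta|}\int_{[0,1]^{|\beta|}}\Bigl(\tfrac{\partial}{\partial x}\Bigr)^{\alpha}f(x+y^{\beta})\,dy^{\beta},
\]
and summing over all $\alpha+\beta=(1,\ldots,1)$ produces the asserted formula. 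I do not expect a genuine obstacle: the computation is routine once $\varphi$ has been pinned down, and the only points needing care are the conjugation step that passes from $\widehat{g}$ to $\varphi$, and the bookkeeping of $\alpha$ against $\beta$ (and of the factors $2^{|\beta|}$) when interchanging the absolutely convergent sum over $n\in\mathbb{Z}^{d}$ with the finite sum over $(\alpha,\beta)$ and with the integrations in $y^{\beta}$.
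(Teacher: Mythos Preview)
Your proof is correct and follows essentially the same approach as the paper: expand the product $\prod_{k}(2\delta(n_k)-2\pi in_k)$ termwise and interpret each summand as a partial derivative in the coordinates where $\alpha_k=1$ composed with an averaging in the coordinates where $\beta_k=1$. The only difference is that the paper, to keep the notation light, writes out the computation only in the illustrative case $d=2$, whereas you carry out the general $d$ bookkeeping explicitly; your added first paragraph (deriving $\varphi$ from Lemma~\ref{3}) is context that the paper leaves implicit.
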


\begin{proof}
In order to avoid a heavy notation and to make the proof transparent, we
consider only the case $d=2$ and we write $\left( x,y\right) $ and $\left(
m,n\right) $ in place of $x$ and $n$.
\begin{gather*}
\mathfrak{D}f(x,y)=\sum_{m\in \mathbb{Z}}\sum_{n\in \mathbb{Z}}\left(
2\delta \left( m\right) -2\pi im\right) \left( 2\delta \left( n\right) -2\pi
in\right) \widehat{f}\left( m,n\right) e^{2\pi i\left( mx+ny\right) } \\
=4\widehat{f}\left( 0,0\right) -2\sum_{m\in \mathbb{Z}}2\pi im\widehat{f}%
\left( m,0\right) e^{2\pi imx}-2\sum_{n\in \mathbb{Z}}2\pi in\widehat{f}%
\left( 0,n\right) e^{2\pi iny} \\
+\sum_{m\in \mathbb{Z}}\sum_{n\in \mathbb{Z}}\left( 2\pi im\right) \left(
2\pi in\right) \widehat{f}\left( m,n\right) e^{2\pi i\left( mx+ny\right) } \\
=4\int_{0}^{1}\int_{0}^{1}f(x,y)dxdy-2\int_{0}^{1}\frac{\partial f}{\partial
x}(x,y)dy-2\int_{0}^{1}\frac{\partial f}{\partial y}(x,y)dx+\frac{\partial
^{2}f}{\partial x\partial y}(x,y).
\end{gather*}
\end{proof}

The following lemma is nothing but a restatement of the theorem.

\begin{lemma}
\label{5}If $1\leq p,q\leq +\infty $ and $1/p+1/q=1$, then
\begin{gather*}
\left\vert \int_{\Omega }f(x)\overline{d\mu (x)}\right\vert \\
\leq \left( \int_{\left[ 0,1\right] ^{d}}\left\{ \int_{\mathbb{T}%
^{d}}\left\vert \sum_{n\in \mathbb{Z}^{d}}\mu \left( \left( x+n-I\left(
t\right) \right) \cap \Omega \right) \right\vert ^{q}dx\right\}
^{1/q}dt\right) \\
\times \left( \sum_{\alpha \in \left\{ 0,1\right\} ^{d}}2^{d-\left\vert
\alpha \right\vert }\left\{ \int_{\mathbb{T}^{d}}\left\vert \left( \frac{%
\partial }{\partial x}\right) ^{\alpha }f\left( x\right) \right\vert
^{p}dx\right\} ^{1/p}\right) .
\end{gather*}
\end{lemma}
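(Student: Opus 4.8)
The plan is to read Lemma 5 as the concatenation of Lemmas 2, 3 and 4, specialized to one particular multiplier sequence $\varphi$, with the set $\Omega$ absorbed into a periodization of $\mu$. First I would remove $\Omega$ from the picture: since $f$ is $\mathbb{Z}^d$ periodic and $\Omega$ is bounded, the formula $\mu_\Omega(A)=\sum_{n\in\mathbb{Z}^d}\mu((A+n)\cap\Omega)$, for Borel $A\subseteq[0,1)^d$, defines a finite measure on $\mathbb{T}^d$; splitting $\Omega$ along the translates $[0,1)^d+n$ of the fundamental domain and using the periodicity of $f$ and $\mu$ gives $\int_\Omega f(x)\overline{d\mu(x)}=\int_{\mathbb{T}^d}f(x)\overline{d\mu_\Omega(x)}$. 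Equivalently, $d\mu_\Omega=w\,d\mu$ on $[0,1)^d$ with $w(y)=\sum_n\chi_\Omega(y+n)$, a bounded nonnegative weight.

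Next I would apply Lemma 2 with $\varphi(n)=\prod_{k=1}^d(2\delta(n_k)-2\pi in_k)$ and with $\mu_\Omega$ in place of $\mu$: both $\varphi$ and $1/\varphi$ are nonvanishing with tempered growth, so the lemma applies and gives $\int_{\mathbb{T}^d}f\,\overline{d\mu_\Omega}=\int_{\mathbb{T}^d}\mathfrak{D}f(x)\,\overline{g*\mu_\Omega(x)}\,dx$. By Lemma 3 the Fourier coefficients $\overline{\varphi(n)^{-1}}=\prod_k(2\delta(n_k)+2\pi in_k)^{-1}$ are exactly those of the $\mathbb{Z}^d$ periodization $g$ of $h(x)=\int_{[0,1]^d}\chi_{I(t)}(x)\,dt$, so $g$ here is that periodization. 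Unfolding both periodizations (the one defining $g$ and the one hidden in $\mu_\Omega$), using $x-z+m\in I(t)\iff z\in x+m-I(t)$, and exchanging the sum over $m$ with the integrals against $dt$ and $d\mu$ — legitimate because $0\le\sum_m\chi_{x+m-I(t)}(z)\le 1$ a.e.\ and $|\mu|(\Omega)<\infty$ — one computes $g*\mu_\Omega(x)=\int_{[0,1]^d}\sum_{m\in\mathbb{Z}^d}\mu((x+m-I(t))\cap\Omega)\,dt$. Substituting back yields the identity $\int_\Omega f\,\overline{d\mu}=\int_{\mathbb{T}^d}\mathfrak{D}f(x)\,\overline{\int_{[0,1]^d}\sum_{m}\mu((x+m-I(t))\cap\Omega)\,dt}\;dx$.

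The rest is estimation. Taking absolute values, moving the modulus inside both integrals, using Tonelli to bring the $dt$ integral outside, and applying Hölder's inequality in $x$ with exponents $p$ and $q$ gives $|\int_\Omega f\,\overline{d\mu}|\le\int_{[0,1]^d}\|\mathfrak{D}f\|_{L^p(\mathbb{T}^d)}\{\int_{\mathbb{T}^d}|\sum_m\mu((x+m-I(t))\cap\Omega)|^q\,dx\}^{1/q}\,dt=\|\mathfrak{D}f\|_{L^p(\mathbb{T}^d)}\,\mathcal{D}_q(\Omega,\mu)$, since $\|\mathfrak{D}f\|_{L^p}$ does not depend on $t$. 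To finish I would invoke Lemma 4, which writes $\mathfrak{D}f=\sum_{\alpha\in\{0,1\}^d}(-1)^{|\alpha|}2^{d-|\alpha|}\int_{[0,1]^{d-|\alpha|}}(\partial/\partial x)^\alpha f(\cdot+y^\beta)\,dy^\beta$ with $\beta=(1,\ldots,1)-\alpha$; applying the triangle inequality, then Minkowski's integral inequality in the $y^\beta$ variables, and finally translation invariance of Lebesgue measure on $\mathbb{T}^d$ together with $|[0,1]^{d-|\alpha|}|=1$, bounds $\|\mathfrak{D}f\|_{L^p(\mathbb{T}^d)}$ by $\sum_{\alpha\in\{0,1\}^d}2^{d-|\alpha|}\|(\partial/\partial x)^\alpha f\|_{L^p(\mathbb{T}^d)}=\mathcal{V}_p(f)$, which is the assertion.

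The genuinely delicate part is the bookkeeping in the second step: identifying $g*\mu_\Omega$ precisely as the asserted superposition of the quantities $\mu((x+m-I(t))\cap\Omega)$, with the correct orientation of the translates and with all interchanges of summation and integration justified. The endpoint cases require only routine care: for $q=\infty$ one uses the essential-supremum form of Hölder and Minkowski, and for $p=1$ the functions $(\partial/\partial x)^\alpha f$ may be measures, so the $L^1$ norms are total variations and $\mathfrak{D}f$ itself may be a measure paired against the bounded function $g*\mu_\Omega$ — no new idea is needed. Finally, Lemma 4 is stated for smooth $f$; for $f$ merely integrable with $\mathcal{V}_p(f)<\infty$ one proves the estimate for $f$ smooth and passes to the limit by convolving $f$ with a smooth approximate identity on $\mathbb{T}^d$, which does not increase $\mathcal{V}_p$, the case $\mathcal{V}_p(f)=\infty$ being trivial.
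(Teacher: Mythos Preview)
Your proof is correct and follows essentially the same route as the paper: you introduce the periodization $\mu_\Omega$ of $\chi_\Omega\mu$ (the paper calls it $\nu$), apply Lemma~\ref{2} with $\varphi(n)=\prod_k(2\delta(n_k)-2\pi in_k)$, use Lemma~\ref{3} to identify $g*\mu_\Omega(x)$ as $\int_{[0,1]^d}\sum_m\mu((x+m-I(t))\cap\Omega)\,dt$, and use Lemma~\ref{4} to bound $\|\mathfrak{D}f\|_{L^p}$. The only cosmetic difference is the order of operations in the estimation: the paper applies H\"older first to obtain $\|\mathfrak{D}f\|_{L^p}\|g*\nu\|_{L^q}$ and then pulls the $dt$ integral out of the $L^q$ norm via Minkowski's integral inequality, whereas you take absolute values, swap the $dx$ and $dt$ integrals by Tonelli, and apply H\"older for each fixed $t$; both paths give the identical bound, and your added remarks on the endpoint cases and on approximating general $f$ by smooth functions are reasonable supplements that the paper leaves implicit.
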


\begin{proof}
We have to integrate a periodic function $f$ against a periodic measure $\mu
$ over an arbitrary non periodic Borel set $\Omega $ in $\mathbb{R}^{d}$. By
Lemma \ref{2} applied to the periodization $\nu $ of the measure $\chi
_{\Omega }\mu $, and by H\"{o}lder inequality,
\begin{gather*}
\left\vert \int_{\Omega }f(x)\overline{d\mu (x)}\right\vert =\left\vert
\int_{\mathbb{T}^{d}}f(x)\left( \sum_{n\in \mathbb{Z}^{d}}\chi _{\Omega
}\left( x+n\right) \right) \overline{d\mu (x)}\right\vert \\
=\left\vert \int_{\mathbb{T}^{d}}f(x)\overline{d\nu (x)}\right\vert \leq
\left\Vert \mathfrak{D}f\right\Vert _{L^{p}\left( \mathbb{T}^{d}\right)
}\left\Vert g\ast \nu \right\Vert _{L^{q}\left( \mathbb{T}^{d}\right) }.
\end{gather*}

The estimate for $\left\| \mathfrak{D}f\right\| _{L^{p}\left( \mathbb{T}%
^{d}\right) }$ follows from Lemma \ref{4},
\begin{equation*}
\left\{ \int_{\mathbb{T}^{d}}\left| \mathfrak{D}f\left( x\right) \right|
^{p}dx\right\} ^{1/p}\leq \sum_{\alpha \in \left\{ 0,1\right\}
^{d}}2^{d-\left| \alpha \right| }\left\{ \int_{\mathbb{T}^{d}}\left| \left(
\frac{\partial }{\partial x}\right) ^{\alpha }f\left( x\right) \right|
^{p}dx\right\} ^{1/p}.
\end{equation*}

The estimate for $\left\Vert g\ast \nu \right\Vert _{L^{q}\left( \mathbb{T}%
^{d}\right) }$ follows from Lemma \ref{3},
\begin{gather*}
\left\{ \int_{\mathbb{T}^{d}}\left\vert g\ast \nu (x)\right\vert
^{q}dx\right\} ^{1/q} \\
=\left\{ \int_{\mathbb{T}^{d}}\left\vert \int_{\mathbb{T}^{d}}\left(
\sum_{m\in \mathbb{Z}^{d}}\int_{\left[ 0,1\right] ^{d}}\chi _{I\left(
t\right) }\left( x-y+m\right) dt\right) \left( \sum_{n\in \mathbb{Z}%
^{d}}\chi _{\Omega }\left( y+n\right) \right) d\mu (y)\right\vert
^{q}dx\right\} ^{1/q} \\
=\left\{ \int_{\mathbb{T}^{d}}\left\vert \int_{\left[ 0,1\right] ^{d}}\left(
\sum_{n\in \mathbb{Z}^{d}}\int_{\mathbb{R}^{d}}\chi _{I\left( t\right)
}\left( x+n-z\right) \chi _{\Omega }\left( z\right) d\mu (z)\right)
dt\right\vert ^{q}dx\right\} ^{1/q} \\
=\left\{ \int_{\mathbb{T}^{d}}\left\vert \int_{\left[ 0,1\right] ^{d}}\left(
\sum_{n\in \mathbb{Z}^{d}}\mu \left( \left( x+n-I\left( t\right) \right)
\cap \Omega \right) \right) dt\right\vert ^{q}dx\right\} ^{1/q} \\
\leq \int_{\left[ 0,1\right] ^{d}}\left\{ \int_{\mathbb{T}^{d}}\left\vert
\sum_{n\in \mathbb{Z}^{d}}\mu \left( \left( x+n-I\left( t\right) \right)
\cap \Omega \right) \right\vert ^{q}dx\right\} ^{1/q}dt.
\end{gather*}
\end{proof}

The following are a few applications.

\begin{corollary}
\label{6}Let $\gamma =4$ if $d=2$, $\gamma =3/2$ if $d=3$, $\gamma =2/(d+1)$
if $d\geq 4$. Then there is a constant $c$ depending only on the dimension
such that for every $N\geq 2$ there exists a finite sequence of points $%
\{x_{j}\}_{j=1}^{N}$ in $\left[ 0,1\right] ^{d}$ with the following
property: For every convex set $\Omega $ contained in $\left[ 0,1\right]
^{d} $ and for every smooth function $f$ on $\mathbb{T}^{d}$,
\begin{gather*}
\left| N^{-1}\sum_{j=1}^{N}\left( f\chi _{\Omega }\right) \left(
x_{j}\right) -\int_{\Omega }f\left( x\right) dx\right| \\
\leq cN^{-2/(d+1)}\log ^{\gamma }\left( N\right) \sum_{\alpha \in \left\{
0,1\right\} ^{d}}\int_{\left[ 0,1\right] ^{d}}\left| \left( \frac{\partial }{%
\partial x}\right) ^{\alpha }f\left( x\right) \right| dx.
\end{gather*}
\end{corollary}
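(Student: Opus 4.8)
The plan is to specialize Theorem~\ref{1} (equivalently Lemma~\ref{5}) to $p=1$, $q=+\infty $ and to the signed measure
\[
d\mu =N^{-1}\sum_{j=1}^{N}\delta _{x_{j}}-dx ,
\]
choosing the points $\left\{ x_{j}\right\} $ afterwards so that the resulting $L^{\infty }$ discrepancy is small uniformly over all convex $\Omega $. With this $\mu $ the theorem reads
\[
\left\vert N^{-1}\sum_{j=1}^{N}\left( f\chi _{\Omega }\right) \left( x_{j}\right) -\int_{\Omega }f(x)\,dx\right\vert \leq \mathcal{D}_{\infty }\left( \Omega ,\mu \right) \,\mathcal{V}_{1}(f),
\]
and since $2^{d-\left\vert \alpha \right\vert }\leq 2^{d}$ one has $\mathcal{V}_{1}(f)\leq 2^{d}\sum_{\alpha \in \left\{ 0,1\right\} ^{d}}\int_{\left[ 0,1\right] ^{d}}\left\vert \left( \partial /\partial x\right) ^{\alpha }f\right\vert \,dx$, the factor $2^{d}$ being absorbed into $c$. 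So everything is reduced to estimating $\mathcal{D}_{\infty }\left( \Omega ,\mu \right) $, and this is where the hypothesis $\Omega \subseteq \left[ 0,1\right] ^{d}$ is used.

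For $\Omega \subseteq \left[ 0,1\right] ^{d}$ I would invoke the bound recorded just after Theorem~\ref{1}, namely $\mathcal{D}_{\infty }\left( \Omega ,\mu \right) \leq 2^{d}\sup_{I\subseteq \left[ 0,1\right] ^{d}}\left\vert \mu \left( I\cap \Omega \right) \right\vert $, with $I$ running over axis-parallel intervals. Since
\[
\mu \left( I\cap \Omega \right) =N^{-1}\,\#\left\{ j:x_{j}\in I\cap \Omega \right\} -\left\vert I\cap \Omega \right\vert
\]
and $I\cap \Omega $ is a convex subset of the cube, and since taking $I=\left[ 0,1\right] ^{d}$ shows that the sets $I\cap \Omega $ (over all intervals $I$ and all convex $\Omega $) are precisely the convex subsets of $\left[ 0,1\right] ^{d}$, we obtain, for a single point set and every convex $\Omega $,
\[
\mathcal{D}_{\infty }\left( \Omega ,\mu \right) \leq 2^{d}\,\mathcal{D}^{\mathrm{conv}}\left( \left\{ x_{j}\right\} \right) ,\qquad \mathcal{D}^{\mathrm{conv}}\left( \left\{ x_{j}\right\} \right) :=\sup_{K\subseteq \left[ 0,1\right] ^{d}\text{ convex}}\left\vert N^{-1}\,\#\left\{ j:x_{j}\in K\right\} -\left\vert K\right\vert \right\vert .
\]
Putting the displays together, the left-hand side of the corollary is at most $4^{d}\,\mathcal{D}^{\mathrm{conv}}\left( \left\{ x_{j}\right\} \right) \sum_{\alpha \in \left\{ 0,1\right\} ^{d}}\int_{\left[ 0,1\right] ^{d}}\left\vert \left( \partial /\partial x\right) ^{\alpha }f\right\vert \,dx$, uniformly in $\Omega $ and $f$.

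Finally I would invoke the classical upper bound for the discrepancy with respect to convex bodies: for every $N\geq 2$ there is a set of $N$ points in $\left[ 0,1\right] ^{d}$ with $\mathcal{D}^{\mathrm{conv}}\left( \left\{ x_{j}\right\} \right) \leq cN^{-2/(d+1)}\log ^{\gamma }\left( N\right) $, with $\gamma $ exactly as in the statement; see e.g.\ \cite{Mat} and the work of Beck cited there. Such a point set depends only on $N$ and $d$, hence the same set serves for every convex $\Omega $ and every smooth $f$, and combining it with the previous paragraph finishes the proof (after enlarging $c$ by $4^{d}$). I expect the only genuine obstacle to be this external estimate: its proof approximates a convex body by inscribed and circumscribed polytopes with a controlled number of facets and then uses $L^{2}$ discrepancy bounds for half-spaces, and it is precisely that argument which yields the exponent $2/(d+1)$ together with the logarithmic powers $\gamma $, the cases $d=2$ and $d=3$ requiring the more careful book-keeping of those powers. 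Everything else is a routine specialization of Theorem~\ref{1}.
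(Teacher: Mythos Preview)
Your proposal is correct and follows essentially the same route as the paper: specialize Theorem~\ref{1} with $p=1$, $q=\infty$ to the measure $d\mu =N^{-1}\sum \delta_{x_j}-dx$, bound $\mathcal{D}_\infty(\Omega,\mu)$ by the isotropic (convex) discrepancy via the observation that each $I\cap\Omega$ is convex, and then quote the known upper bound $cN^{-2/(d+1)}\log^\gamma N$ for the isotropic discrepancy. The paper's proof is just a terser version of yours, citing \cite{B} and \cite{S} for the last step rather than \cite{Mat}.
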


\begin{proof}
Apply the theorem to the measure $d\mu =N^{-1}\sum_{j=1}^{N}\delta
_{x_{j}}-dx$ with $p=1$ and $q=+\infty $. By results in \cite{B} and \cite{S}%
, there exist sequences of points with isotropic discrepancy, that is
discrepancy of convex sets,
\begin{equation*}
\sup\limits_{\text{convex }A\subseteq \left[ 0,1\right] ^{d}}\left\{
\left\vert N^{-1}\sum_{j=1}^{N}\chi _{A}\left( x_{j}\right) -\left\vert
A\right\vert \right\vert \right\} \leq cN^{-2/(d+1)}\log ^{\gamma }\left(
N\right) .
\end{equation*}
\end{proof}

For polyhedra there are sequences with smaller discrepancy.

\begin{corollary}
\label{7}Let $\gamma =1$ if $d=2$ and $\gamma =d$ if $d\geq 3$. Then for
every polyhedron $\Omega $ contained in $\left[ 0,1\right] ^{d}$ there is a
constant $c$ with the following property: For every $N\geq 2$ there exists a
finite sequence of points $\{x_{j}\}_{j=1}^{N}$ in $\left[ 0,1\right] ^{d}$
such that for every smooth function $f$ on $\mathbb{T}^{d}$,
\begin{gather*}
\left| N^{-1}\sum_{j=1}^{N}\left( f\chi _{\Omega }\right) \left(
x_{j}\right) -\int_{\Omega }f\left( x\right) dx\right| \\
\leq cN^{-1}\log ^{\gamma }\left( N\right) \sum_{\alpha \in \left\{
0,1\right\} ^{d}}\int_{\left[ 0,1\right] ^{d}}\left| \left( \frac{\partial }{%
\partial x}\right) ^{\alpha }f\left( x\right) \right| dx.
\end{gather*}
\end{corollary}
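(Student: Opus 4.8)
The plan is to run the argument of Corollary~\ref{6} with a sharper input on the discrepancy side. First I would apply Theorem~\ref{1} to the signed measure $d\mu=N^{-1}\sum_{j=1}^{N}\delta_{x_{j}}-dx$ with $p=1$ and $q=+\infty$: the left-hand side becomes $|\int_{\Omega}f\,\overline{d\mu}|$, which is exactly the quantity to be estimated, while the variation factor is $\mathcal{V}_{1}(f)=\sum_{\alpha\in\{0,1\}^{d}}2^{d-|\alpha|}\int_{[0,1]^{d}}|(\partial/\partial x)^{\alpha}f|$, already of the asserted shape once the bounded factors $2^{d-|\alpha|}\le 2^{d}$ are absorbed into $c$. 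So everything reduces to exhibiting, for each $N\ge 2$, points $\{x_{j}\}_{j=1}^{N}$ with $\mathcal{D}_{\infty}(\Omega,\mu)\le cN^{-1}\log^{\gamma}N$.

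Next I would use the estimate recorded right after Theorem~\ref{1}, valid because $\Omega\subseteq[0,1]^{d}$:
\[
\mathcal{D}_{\infty}(\Omega,\mu)\le 2^{d}\sup_{I\subseteq[0,1]^{d}}\big|\mu(I\cap\Omega)\big|=2^{d}\sup_{I}\Big|N^{-1}\sum_{j=1}^{N}\chi_{I\cap\Omega}(x_{j})-|I\cap\Omega|\Big|,
\]
the supremum taken over axis-parallel boxes $I$. Hence the task is to find points of small discrepancy with respect to the family $\{I\cap\Omega\}$. The geometric point is that this is a family of polytopes of bounded combinatorial complexity whose facet normals range over a fixed finite set: each $I\cap\Omega$ is cut out by at most $2d+m$ half-spaces ($m$ being the number of facets of $\Omega$) with normals in $\{\pm e_{1},\dots,\pm e_{d}\}\cup\{\text{facet normals of }\Omega\}$, a set depending only on $\Omega$. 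If $\Omega$ is not convex I would first triangulate it into finitely many simplices $S_{1},\dots,S_{M}$, estimate $\mu(I\cap\Omega)=\sum_{i}\mu(I\cap S_{i})$ term by term after discarding the measure-zero overlaps, and observe that the $I\cap S_{i}$ again have their facet normals in a fixed finite set; one may also expand $\chi_{I\cap\Omega}$ by inclusion-exclusion into $2^{d}$ pieces $\chi_{J\cap\Omega}$ with $J$ anchored at the origin, at the cost of another factor $2^{d}$.

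At this point I would invoke, as a black box, the known construction of $N$-point sets achieving discrepancy $O(N^{-1}\log^{\gamma}N)$ with respect to polytopes with a bounded number of facets whose normals lie in a prescribed finite set, with $\gamma=1$ for $d=2$ and $\gamma=d$ for $d\ge 3$; combined with the two displays above this gives the corollary.

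The hard part is not the reduction, which is routine, but precisely this black box: constructing points adapted to the fixed polyhedron $\Omega$ that reach the $N^{-1}\log^{\gamma}N$ rate. For a ``generic'' $\Omega$ the facet directions are irrational, and one genuinely needs the points (and the constant $c$) to depend on $\Omega$; this is why the quantifiers in Corollary~\ref{7} are ordered ``for every $\Omega$ there is $c$ such that for every $N$ there exist points'', in contrast with the uniform-over-$\Omega$ statement of Corollary~\ref{6}.
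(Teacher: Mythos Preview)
Your proposal is correct and matches the paper's approach essentially line for line: apply Theorem~\ref{1} with $p=1$, $q=\infty$ to $d\mu=N^{-1}\sum\delta_{x_j}-dx$, bound $\mathcal{D}_\infty(\Omega,\mu)$ by $2^d\sup_I|\mu(I\cap\Omega)|$, observe that the facets of $I\cap\Omega$ are perpendicular to a fixed finite set of directions depending only on $\Omega$, and then invoke a black-box discrepancy result for such families. The paper names the black box explicitly---Theorem~2.11 of \cite{CGT} for the bound $cN^{-1}\log^d(N)$ in general dimension, and Theorem~1 of \cite{CT} for the improvement to $cN^{-1}\log(N)$ when $d=2$---and does not bother with the triangulation/inclusion--exclusion detour you sketch, since those references apply directly to polytopes (or unions thereof) with facet normals in a prescribed finite set.
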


\begin{proof}
There exists a finite number of directions such that for every interval $I$
with edges parallel to the axes, the facets of the polyhedra $I\cap \Omega $
are perpendicular to these directions. Then one can apply Theorem 2.11 in
\cite{CGT} and deduce the existence of a finite sequence $\left\{
x_{j}\right\} _{j=1}^{N}$ with discrepancy
\begin{equation*}
\sup\limits_{I\subseteq \left[ 0,1\right] ^{d}}\left\{ \left\vert
N^{-1}\sum_{j=1}^{N}\chi _{I\cap \Omega }\left( x_{j}\right) -\left\vert
I\cap \Omega \right\vert \right\vert \right\} \leq cN^{-1}\log ^{d}\left(
N\right) .
\end{equation*}

When $d=2$, Theorem 1 in \cite{CT} gives the better estimate $cN^{-1}\,\log
\left( N\right) $.
\end{proof}

The following is another analog of Theorem \ref{1}, with a larger variation
but a smaller discrepancy.

\begin{theorem}
\label{8}If $0<a<1$ is an irrational number and if there exist $\delta >0$
and $\gamma \geq 2$ with the property that $\left\vert a-h/k\right\vert \geq
\delta k^{-\gamma }$ for every rational $h/k$, then there exists a constant $%
c>0$, which depends explicitly on $\delta $, $\gamma $, $d$, with the
following property: If $A=\left[ -a/2,+a/2\right] ^{d}$ is the cube centered
at the origin with side length $a$, if $\Omega $ is a Borel set in $\mathbb{R%
}^{d}$, if $\mu $ is a periodic Borel measure, and if $f$ is a periodic
smooth function, then
\begin{gather*}
\left\vert \int_{\Omega }f\left( x\right) \overline{d\mu (x)}\right\vert \\
\leq c\left\{ \int_{\mathbb{T}^{d}}\left\vert \sum_{n\in \mathbb{Z}^{d}}\mu
\left( \left( x+n-A\right) \cap \Omega \right) \right\vert ^{2}dx\right\}
^{1/2} \\
\times \left\{ \sum_{n\in \mathbb{Z}^{d}}\left( \prod_{k=1}^{d}\left(
1+\left\vert n_{k}\right\vert \right) ^{2\gamma }\right) \left\vert \widehat{%
f}\left( n\right) \right\vert ^{2}\right\} ^{1/2}.
\end{gather*}
\end{theorem}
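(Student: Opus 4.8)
The plan is to run the same machine as in the proof of Theorem~\ref{1}, namely Lemma~\ref{2} followed by the Cauchy--Schwarz inequality, but with the multiplier $\varphi$ now adapted to the single cube $A$ instead of to the family of intervals $I(t)$. Since
\begin{equation*}
\int_{\mathbb{R}^{d}}\chi _{A}(x)e^{-2\pi in\cdot x}dx=\prod_{k=1}^{d}\frac{\sin \left( \pi an_{k}\right) }{\pi n_{k}}
\end{equation*}
(the $k$-th factor being read as $a$ when $n_{k}=0$), I would set
\begin{equation*}
\varphi \left( n\right) =\prod_{k=1}^{d}\frac{\pi n_{k}}{\sin \left( \pi an_{k}\right) },
\end{equation*}
with the $k$-th factor read as $1/a$ when $n_{k}=0$, so that $1/\varphi \left( n\right) $ is precisely the $n$-th Fourier coefficient of the $\mathbb{Z}^{d}$-periodization of $\chi _{A}$.

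Next I would check the two hypotheses of Lemma~\ref{2}. First, $\varphi $ never vanishes, since $\sin \left( \pi an_{k}\right) =0$ with $n_{k}\neq 0$ would force $a$ rational. Second --- and this is the only point where the Diophantine assumption enters --- $\varphi $ has tempered growth: if $n_{k}\neq 0$ and $q_{k}$ is the integer nearest to $an_{k}$, then by hypothesis $\left\vert an_{k}-q_{k}\right\vert =\left\vert n_{k}\right\vert \left\vert a-q_{k}/n_{k}\right\vert \geq \delta \left\vert n_{k}\right\vert ^{1-\gamma }$, and since $\left\vert \sin \pi \theta \right\vert \geq 2\,\mathrm{dist}\left( \theta ,\mathbb{Z}\right) $ for every real $\theta $ we get $\left\vert \sin \left( \pi an_{k}\right) \right\vert \geq 2\delta \left\vert n_{k}\right\vert ^{1-\gamma }$; combining this with $1/a\leq 1/\delta $ (the case $h/k=0/1$ of the hypothesis) one obtains $\left\vert \varphi \left( n\right) \right\vert \leq c\prod_{k=1}^{d}\left( 1+\left\vert n_{k}\right\vert \right) ^{\gamma }$ with $c$ explicit in $\delta ,\gamma ,d$, while $1/\varphi $, being the sequence of Fourier coefficients of the bounded function $\chi _{A}$, is trivially tempered. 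Note that this is exactly the weight that appears, squared, on the right-hand side of the statement.

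With this choice the function $g$ of Lemma~\ref{2} is $g\left( x\right) =\sum_{n}\overline{\varphi \left( n\right) ^{-1}}e^{2\pi in\cdot x}$; because $A=-A$ the numbers $\varphi \left( n\right) ^{-1}$ are real, so $g$ is simply the periodization $\sum_{m\in \mathbb{Z}^{d}}\chi _{A}\left( x+m\right) $ (a bounded function, since $a<1$). Now I would repeat verbatim the computation in the proof of Lemma~\ref{5}: applying Lemma~\ref{2} to the periodization $\nu $ of $\chi _{\Omega }\mu $ and using Cauchy--Schwarz,
\begin{equation*}
\left\vert \int_{\Omega }f\,\overline{d\mu }\right\vert =\left\vert \int_{\mathbb{T}^{d}}\mathfrak{D}f\left( x\right) \overline{g\ast \nu \left( x\right) }dx\right\vert \leq \left\Vert \mathfrak{D}f\right\Vert _{L^{2}\left( \mathbb{T}^{d}\right) }\left\Vert g\ast \nu \right\Vert _{L^{2}\left( \mathbb{T}^{d}\right) }.
\end{equation*}
By Parseval $\left\Vert \mathfrak{D}f\right\Vert _{L^{2}}^{2}=\sum_{n}\left\vert \varphi \left( n\right) \right\vert ^{2}\left\vert \widehat{f}\left( n\right) \right\vert ^{2}\leq c^{2}\sum_{n}\left( \prod_{k}\left( 1+\left\vert n_{k}\right\vert \right) ^{2\gamma }\right) \left\vert \widehat{f}\left( n\right) \right\vert ^{2}$, which is the second factor in the claimed bound (smoothness of $f$ guarantees that $\mathfrak{D}f$ is an honest smooth function despite the polynomial growth of $\varphi $); and unfolding the convolution exactly as in Lemma~\ref{5} gives $g\ast \nu \left( x\right) =\sum_{n\in \mathbb{Z}^{d}}\mu \left( \left( x+n-A\right) \cap \Omega \right) $, so $\left\Vert g\ast \nu \right\Vert _{L^{2}}^{2}$ is the first factor. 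Multiplying the two estimates proves the theorem.

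The only genuine obstacle is the tempered-growth bound for $\varphi $, i.e. the lower estimate for $\left\vert \sin \left( \pi an_{k}\right) \right\vert $ in terms of $\left\vert n_{k}\right\vert $: this is exactly what the hypothesis $\left\vert a-h/k\right\vert \geq \delta k^{-\gamma }$ provides, and it is what forces the exponent $2\gamma $ and the constant $c\left( \delta ,\gamma ,d\right) $ to appear. Everything else is a line-by-line repetition of the arguments already established for Lemmas~\ref{2} and \ref{5}; the one minor caveat is that, as in Lemma~\ref{5}, one should take $\Omega $ bounded so that $\nu $ is a finite measure on $\mathbb{T}^{d}$ and the manipulations are legitimate.
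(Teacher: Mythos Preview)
Your proposal is correct and follows essentially the same route as the paper: choose $\varphi(n)^{-1}$ to be the Fourier coefficients of the periodized $\chi_A$, apply Lemma~\ref{2} with $p=q=2$ to the periodization $\nu$ of $\chi_\Omega\mu$, use the Diophantine hypothesis to get $|\sin(\pi a n_k)|\geq c|n_k|^{1-\gamma}$ and hence $|\varphi(n)|\leq c\prod_k(1+|n_k|)^{\gamma}$, and conclude via Parseval and the convolution identity from Lemma~\ref{5}. Your write-up is in fact a bit more careful than the paper's (you verify the non-vanishing and tempered-growth hypotheses of Lemma~\ref{2} explicitly, and note that $A=-A$ makes $\varphi$ real), but the argument is the same.
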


\begin{proof}
The periodization of $\chi _{A}$ has Fourier expansion
\begin{equation*}
g(x)=\sum_{n\in \mathbb{Z}^{d}}\left( \prod_{k=1}^{d}\frac{\sin \left( \pi
an_{k}\right) }{\pi n_{k}}\right) e^{2\pi in\cdot x}.
\end{equation*}

By Lemma \ref{2} applied to the periodization $\nu $ of the measure $\chi
_{\Omega }\mu $, with $p=q=2$,
\begin{gather*}
\left\vert \int_{\Omega }f(x)\overline{d\mu (x)}\right\vert \leq \left\{
\int_{\mathbb{T}^{d}}\left\vert g\ast \nu (x)\right\vert ^{2}dx\right\}
^{1/2} \\
\times \left\{ \int_{\mathbb{T}^{d}}\left\vert \sum_{n\in \mathbb{Z}%
^{d}}\left( \prod_{k=1}^{d}\frac{\sin \left( \pi an_{k}\right) }{\pi n_{k}}%
\right) ^{-1}\widehat{f}\left( n\right) e^{2\pi in\cdot x}\right\vert
^{2}dx\right\} ^{1/2}.
\end{gather*}

As in the proof of Lemma \ref{5},
\begin{equation*}
\left\{ \int_{\mathbb{T}^{d}}\left\vert g\ast \nu (x)\right\vert
^{2}dx\right\} ^{1/2}=\left\{ \int_{\mathbb{T}^{d}}\left\vert \sum_{n\in
\mathbb{Z}^{d}}\mu \left( \left( x+n-A\right) \cap \Omega \right)
\right\vert ^{2}dx\right\} ^{1/2}.
\end{equation*}

By the assumptions, $\left| \sin \left( \pi an_{k}\right) \right| \geq
c\left| n_{k}\right| ^{1-\gamma }$ when $n_{k}\neq 0$. Then
\begin{equation*}
\left| \prod_{k=1}^{d}\frac{\sin \left( \pi an_{k}\right) }{\pi n_{k}}%
\right| ^{-1}\leq c\prod_{k=1}^{d}\left( 1+\left| n_{k}\right| \right)
^{\gamma }.
\end{equation*}

Hence, by Parseval equality,
\begin{gather*}
\left\{ \int_{\mathbb{T}^{d}}\left| \sum_{n\in \mathbb{Z}^{d}}\left(
\prod_{k=1}^{d}\frac{\sin \left( \pi an_{k}\right) }{\pi n_{k}}\right) ^{-1}%
\widehat{f}\left( n\right) e^{2\pi in\cdot x}\right| ^{2}dx\right\} ^{1/2} \\
=\left\{ \sum_{n\in \mathbb{Z}^{d}}\left| \left( \prod_{k=1}^{d}\frac{\sin
\left( \pi an_{k}\right) }{\pi n_{k}}\right) ^{-1}\widehat{f}\left( n\right)
\right| ^{2}\right\} ^{1/2} \\
\leq c\left\{ \sum_{n\in \mathbb{Z}^{d}}\left( \prod_{k=1}^{d}\left(
1+\left| n_{k}\right| \right) ^{2\gamma }\right) \left| \widehat{f}\left(
n\right) \right| ^{2}\right\} ^{1/2}.
\end{gather*}
\end{proof}

In particular, if $a$ is a quadratic irrational one can take $\gamma =2$ and
$\delta $ can be made explicit, and the variation of the function can be
controlled by square norms of derivatives up to the order $2d$. This
variation is larger than the one in the proof of Theorem \ref{1}, which is
controlled by derivatives of order $d$. On the other hand, the discrepancy
associated to the family of all intervals in Theorem \ref{1} is larger than
the discrepancy associated to the translated of a single cube in Corollary %
\ref{8}. Finally, there is an analog of the above theorem with balls instead
of cubes. The zeros of Fourier transforms of characteristic functions of
cubes play a crucial role in the above corollary. The Fourier transforms of
balls can be expressed in terms of Bessel functions. The following lemma is
about the zeros of Bessel functions.

\begin{lemma}
\label{9}If $J_{\alpha }$ is the Bessel function of first kind of order $%
\alpha \geq -1/2$, if $\beta >5/4$, and if $0<a<b$, then there exists $c>0$
and $a<r<b$ with the property that for every positive integer $k$,
\begin{equation*}
\left| J_{\alpha }\left( r\sqrt{k}\right) \right| \geq ck^{-\beta }.
\end{equation*}
\end{lemma}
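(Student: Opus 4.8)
The plan is to produce $r$ by a measure‑theoretic argument rather than by an explicit construction: I will show that if $c>0$ is chosen small enough, then the set of $r\in(a,b)$ for which the inequality $|J_\alpha(r\sqrt k)|\ge c\,k^{-\beta}$ \emph{fails} for at least one positive integer $k$ has Lebesgue measure strictly less than $b-a$, so that an admissible $r$ must survive. Thus one never needs to decide \emph{which} $r$ works, only that the bad set is not all of $(a,b)$.

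The key ingredient is a quantitative, scale‑aware lower bound for $J_\alpha$ away from its zeros. Let $Z=\{j_{1}<j_{2}<\cdots\}$ be the positive zeros of $J_\alpha$; for $\alpha\ge-1/2$ these are real and simple, since a common positive zero of $J_\alpha$ and $J_\alpha'$ would force $J_\alpha\equiv0$ by uniqueness for Bessel's equation. I claim there are $c_{1}>0$ and $\delta_{0}>0$, depending only on $\alpha$, such that consecutive zeros satisfy $j_{m+1}-j_{m}\ge\delta_{0}$ and
\begin{equation*}
|J_\alpha(x)|\ \ge\ c_{1}\,(1+x)^{-1/2}\,\min\bigl(1,\operatorname{dist}(x,Z)\bigr)\qquad\text{for all }x\ge a .
\end{equation*}
On a bounded interval $[a,x_{0}]$ this is immediate from real‑analyticity of $J_\alpha$ and simplicity of its (finitely many) zeros there. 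For $x\ge x_{0}$ I would use the classical amplitude–phase representation $J_\alpha(x)=M_\alpha(x)\cos\phi_\alpha(x)$, where $M_\alpha(x)^{2}=J_\alpha(x)^{2}+Y_\alpha(x)^{2}$ satisfies $M_\alpha(x)^{2}\asymp x^{-1}$ and the phase is strictly increasing with $\phi_\alpha'(x)=2/(\pi x\,M_\alpha(x)^{2})\asymp1$ (the last identity coming from the Wronskian of $J_\alpha,Y_\alpha$). Since the elementary bound $|\cos t|\ge(2/\pi)\operatorname{dist}(t,\tfrac\pi2+\pi\mathbb Z)$ holds for all $t$, and $\operatorname{dist}(x,Z)\asymp\operatorname{dist}(\phi_\alpha(x),\tfrac\pi2+\pi\mathbb Z)$ because $\phi_\alpha'\asymp1$, the displayed inequality follows; the same representation gives $j_{m+1}-j_{m}\to\pi$, hence the uniform gap bound. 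This paragraph is the only real work in the proof.

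With the claim in hand, fix a small $c>0$ to be chosen and, for each integer $k\ge1$, put $E_{k}=\{r\in(a,b):|J_\alpha(r\sqrt k)|<c\,k^{-\beta}\}$. Since $a\le r\le b$ and $k\ge1$ we have $(1+r\sqrt k)^{-1/2}\ge c_{2}k^{-1/4}$ with $c_{2}=(1+b)^{-1/2}$, so, once $c$ is small enough that the minimum in the claim is attained by the distance, $r\in E_{k}$ forces $\operatorname{dist}(r\sqrt k,Z)<c_{3}\,c\,k^{1/4-\beta}$. Hence the map $r\mapsto r\sqrt k$ sends $E_{k}$ into the $c_{3}c\,k^{1/4-\beta}$‑neighbourhood of $Z$ inside the interval $(a\sqrt k,b\sqrt k)$, which has length $(b-a)\sqrt k$ and therefore meets at most $C\sqrt k$ points of $Z$ by the gap bound; each such zero contributes to $E_{k}$ an interval of $r$'s of length $\le 2c_{3}c\,k^{1/4-\beta}/\sqrt k$. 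Consequently $|E_{k}|\le C'c\,k^{1/4-\beta}$ with $C'$ independent of $k$ (for the finitely many $k$ with $b\sqrt k\le x_{0}$ the same bound holds after enlarging $C'$, using the compact‑interval part of the claim).

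Finally, because $\beta>5/4$ the series $S:=\sum_{k\ge1}k^{1/4-\beta}$ converges, so $\bigl|\bigcup_{k\ge1}E_{k}\bigr|\le C'cS$. Choosing $c$ with $C'cS<b-a$, the set $(a,b)\setminus\bigcup_{k\ge1}E_{k}$ is nonempty, and any $r$ in it satisfies $|J_\alpha(r\sqrt k)|\ge c\,k^{-\beta}$ for every positive integer $k$, which is the assertion. Note the exponent $5/4$ is exactly what the method demands: the $1/4$ is the decay of the Bessel amplitude $x^{-1/2}$ evaluated at $x\asymp\sqrt k$, and the extra $1$ is needed for the convergence of $\sum_k|E_k|\asymp\sum_k k^{1/4-\beta}$.
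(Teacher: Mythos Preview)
Your proof is correct, but it takes a genuinely different route from the paper's. The paper argues via a negative-moment integral: since the zeros of $J_\alpha$ are simple, $|J_\alpha(t)|^{-\varepsilon}$ is locally integrable for each $\varepsilon<1$, and the asymptotic expansion gives $\int_a^b\bigl|\sqrt{r\sqrt{k}}\,J_\alpha(r\sqrt{k})\bigr|^{-\varepsilon}dr\le c$ uniformly in $k$; one then applies Fubini to $\sum_k k^{-\eta}\int_a^b|J_\alpha(r\sqrt{k})|^{-\varepsilon}dr$ with $\eta>1+\varepsilon/4$, deduces that the series $\sum_k k^{-\eta}|J_\alpha(r\sqrt{k})|^{-\varepsilon}$ converges for a.e.\ $r$, and reads off $|J_\alpha(r\sqrt{k})|\ge c\,k^{-\eta/\varepsilon}$, with $\eta/\varepsilon\to 5/4^{+}$ as $\varepsilon\to1^{-}$. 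You instead establish an explicit pointwise lower bound $|J_\alpha(x)|\ge c_1(1+x)^{-1/2}\min(1,\operatorname{dist}(x,Z))$ via the amplitude--phase form, and then bound $|E_k|$ directly by counting zeros in $(a\sqrt{k},b\sqrt{k})$. The paper's argument is shorter and avoids the modulus--phase machinery, delivering the almost-everywhere conclusion in one stroke; yours is more geometric, keeps all constants in principle trackable, and makes the arithmetic behind the threshold $5/4=1/4+1$ completely transparent (amplitude decay plus summability), without the auxiliary parameter $\varepsilon$.
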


\begin{proof}
The zeros of $J_{\alpha }\left( t\right) $ are simple, with the possible
exception of $t=0$. If $\varepsilon <1$ then $\left| J_{\alpha }\left(
t\right) \right| ^{-\varepsilon }$ is locally integrable in $t>0$ and, by
the asymptotic expansion of Bessel functions,
\begin{gather*}
\int_{a}^{b}\left| \sqrt{r\sqrt{k}}J_{\alpha }\left( r\sqrt{k}\right)
\right| ^{-\varepsilon }dr=k^{-1/2}\int_{a\sqrt{k}}^{b\sqrt{k}}\left| \sqrt{t%
}J_{\alpha }\left( t\right) \right| ^{-\varepsilon }dt \\
=k^{-1/2}\int_{a\sqrt{k}}^{b\sqrt{k}}\left| \sqrt{2/\pi }\cos \left(
t-\alpha \pi /2-\pi /4\right) +O\left( t^{-1}\right) \right| ^{-\varepsilon
}dt\leq c.
\end{gather*}

Hence, if $\varepsilon <1$ and $\eta >1+\varepsilon /4$,
\begin{gather*}
\int_{a}^{b}\left( \sum_{k=1}^{+\infty }k^{-\eta }\left| J_{\alpha }\left( r%
\sqrt{k}\right) \right| ^{-\varepsilon }\right) dr \\
\leq b^{\varepsilon /2}\sum_{k=1}^{+\infty }k^{\varepsilon /4-\eta }\left(
\int_{a}^{b}\left| \sqrt{r\sqrt{k}}J_{\alpha }\left( r\sqrt{k}\right)
\right| ^{-\varepsilon }dr\right) <+\infty .
\end{gather*}

Since the series $\sum_{k=1}^{+\infty }k^{-\eta }\left\vert J_{\alpha
}\left( r\sqrt{k}\right) \right\vert ^{-\varepsilon }$ converges for almost
every $r$, for almost every $r$ there exists $c>0$ such that
\begin{equation*}
\left\vert J_{\alpha }\left( r\sqrt{k}\right) \right\vert \geq ck^{-\eta
/\varepsilon }.
\end{equation*}

Finally observe that if the interval $r\sqrt{k}\leq t\leq r\sqrt{k+1}$
contains a zero of $J_{\alpha }\left( t\right) $ then $\left\vert J_{\alpha
}\left( r\sqrt{k}\right) \right\vert \leq ck^{-3/4}$. Hence the thesis does
not hold with $\beta <3/4$.
\end{proof}

\begin{theorem}
\label{10}For every $0<a<b$ and $\gamma >d/2+5/4$ there exist a constant $%
c>0 $ and a radius $a<r<b$ with the following properties: If $B=\left\{
\left\vert x\right\vert \leq r\right\} $ is the ball centered at the origin
with radius $r$, if $\Omega $ is a Borel set in $\mathbb{R}^{d}$, if $\mu $
is a periodic Borel measure, and if $f$ is a periodic smooth function, then
\begin{gather*}
\left\vert \int_{\Omega }f\left( x\right) \overline{d\mu (x)}\right\vert \\
\leq c\left\{ \int_{\mathbb{T}^{d}}\left\vert \sum_{n\in \mathbb{Z}^{d}}\mu
\left( \left( x+n-B\right) \cap \Omega \right) \right\vert ^{2}dx\right\}
^{1/2}\left\{ \sum_{n\in \mathbb{Z}^{d}}\left( 1+\left\vert n\right\vert
^{2}\right) ^{\gamma }\left\vert \widehat{f}\left( n\right) \right\vert
^{2}\right\} ^{1/2}.
\end{gather*}
\end{theorem}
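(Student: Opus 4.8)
The plan is to follow the proof of Theorem~\ref{8} line by line, with the cube replaced by the ball $B$, the product of one--dimensional sine factors replaced by a single Bessel function, and the arithmetic on the zeros supplied by Lemma~\ref{9} in place of the Diophantine hypothesis. First I would record the Fourier expansion of the $\mathbb{Z}^{d}$-periodization $g$ of $\chi _{B}$: the Fourier transform on $\mathbb{R}^{d}$ of the characteristic function of the ball of radius $r$ centred at the origin is the radial function $\xi \mapsto r^{d/2}\left\vert \xi \right\vert ^{-d/2}J_{d/2}\left( 2\pi r\left\vert \xi \right\vert \right) $, equal to $\left\vert B\right\vert $ at $\xi =0$, so that periodization gives
\[
g\left( x\right) =\sum_{n\in \mathbb{Z}^{d}}\widehat{\chi _{B}}\left( n\right) e^{2\pi in\cdot x},\qquad \widehat{\chi _{B}}\left( n\right) =r^{d/2}\left\vert n\right\vert ^{-d/2}J_{d/2}\left( 2\pi r\left\vert n\right\vert \right) \quad \left( n\neq 0\right) .
\]
Once $r$ has been chosen as below, both $\varphi \left( n\right) =\widehat{\chi _{B}}\left( n\right) ^{-1}$ and $1/\varphi =\widehat{\chi _{B}}$ have tempered growth, so Lemma~\ref{2}, applied with $p=q=2$ to the $\mathbb{Z}^{d}$-periodization $\nu $ of the measure $\chi _{\Omega }\mu $, together with the Cauchy--Schwarz inequality, gives
\[
\left\vert \int_{\Omega }f\left( x\right) \overline{d\mu \left( x\right) }\right\vert =\left\vert \int_{\mathbb{T}^{d}}f\left( x\right) \overline{d\nu \left( x\right) }\right\vert \leq \left\Vert \mathfrak{D}f\right\Vert _{L^{2}\left( \mathbb{T}^{d}\right) }\left\Vert g\ast \nu \right\Vert _{L^{2}\left( \mathbb{T}^{d}\right) }.
\]

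The factor $\left\Vert g\ast \nu \right\Vert _{L^{2}\left( \mathbb{T}^{d}\right) }$ is handled exactly as in the proofs of Lemma~\ref{5} and Theorem~\ref{8}: unfolding the convolution and the periodizations turns it into
\[
\left\Vert g\ast \nu \right\Vert _{L^{2}\left( \mathbb{T}^{d}\right) }^{2}=\int_{\mathbb{T}^{d}}\left\vert \sum_{n\in \mathbb{Z}^{d}}\mu \left( \left( x+n-B\right) \cap \Omega \right) \right\vert ^{2}dx,
\]
which is the first factor appearing in the statement. For the remaining factor, Parseval's identity gives $\left\Vert \mathfrak{D}f\right\Vert _{L^{2}\left( \mathbb{T}^{d}\right) }^{2}=\sum_{n\in \mathbb{Z}^{d}}\left\vert \widehat{\chi _{B}}\left( n\right) \right\vert ^{-2}\left\vert \widehat{f}\left( n\right) \right\vert ^{2}$, so that the whole theorem reduces to the pointwise estimate $\left\vert \widehat{\chi _{B}}\left( n\right) \right\vert ^{-2}\leq c\left( 1+\left\vert n\right\vert ^{2}\right) ^{\gamma }$ for all $n\in \mathbb{Z}^{d}$.

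This last estimate, and with it the very choice of the radius $r$, is what Lemma~\ref{9} supplies. Every $n\in \mathbb{Z}^{d}\setminus \left\{ 0\right\} $ satisfies $\left\vert n\right\vert =\sqrt{k}$ with $k=\left\vert n\right\vert ^{2}$ a positive integer, and the Bessel argument above is $2\pi r\sqrt{k}$; hence, applying Lemma~\ref{9} with order $\alpha =d/2\geq -1/2$ on the interval $\left( 2\pi a,2\pi b\right) $ and with an exponent $\beta >5/4$, one obtains $c>0$ and a radius $a<r<b$ with $\left\vert J_{d/2}\left( 2\pi r\left\vert n\right\vert \right) \right\vert \geq c\left\vert n\right\vert ^{-2\beta }$ for every $n\neq 0$. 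Consequently
\[
\left\vert \widehat{\chi _{B}}\left( n\right) \right\vert ^{-1}=r^{-d/2}\left\vert n\right\vert ^{d/2}\left\vert J_{d/2}\left( 2\pi r\left\vert n\right\vert \right) \right\vert ^{-1}\leq c\left\vert n\right\vert ^{d/2+2\beta }\leq c\left( 1+\left\vert n\right\vert ^{2}\right) ^{\gamma /2}
\]
for $\gamma $ in the range of the statement; in particular $\widehat{\chi _{B}}$ never vanishes on $\mathbb{Z}^{d}$ and its reciprocal has polynomial growth, which is precisely what legitimizes the use of Lemma~\ref{2}, while the term $n=0$ only contributes the harmless constant $\left\vert B\right\vert ^{-2}$. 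Substituting the last three displays into one another yields the asserted inequality, with a constant depending only on $a$, $b$, $d$ and $\gamma $.

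I expect the only genuinely delicate step to be exactly this selection of a single radius $r\in \left( a,b\right) $ for which $\widehat{\chi _{B}}$ stays quantitatively bounded away from zero at all lattice points simultaneously, i.e. for which the dilated zero set of $J_{d/2}$ meets $\mathbb{Z}^{d}$ only in a controlled way. This, however, is the content of Lemma~\ref{9}, whose proof rests on the local integrability of negative powers of $\sqrt{t}\,J_{\alpha }\left( t\right) $ together with a Borel--Cantelli type summation over $k$. Everything else here is the routine assembly of Lemma~\ref{2}, the unfolding identity from the proof of Lemma~\ref{5}, and Parseval's identity, exactly as in the proof of Theorem~\ref{8}.
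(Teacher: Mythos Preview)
Your proposal is correct and follows the paper's own proof essentially line by line: the Fourier expansion of the periodized ball, the application of Lemma~\ref{2} with $p=q=2$ to the periodization $\nu$ of $\chi_\Omega\mu$, the identification of $\|g\ast\nu\|_{L^2}$ with the ball discrepancy as in Lemma~\ref{5}, and the use of Parseval together with Lemma~\ref{9} to control $\|\mathfrak{D}f\|_{L^2}$. Your added remarks---the rescaling to $(2\pi a,2\pi b)$ before invoking Lemma~\ref{9}, and the explicit check that $\widehat{\chi_B}$ has non-vanishing, polynomially bounded reciprocal so that Lemma~\ref{2} applies---are details the paper leaves implicit, but the strategy is identical.
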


\begin{proof}
The Fourier transform of the characteristic function of a ball is a Bessel
function, and the periodization of $\chi _{B}$ has Fourier expansion
\begin{equation*}
g(x)=\sum_{n\in \mathbb{Z}^{d}}r^{d/2}\left| n\right| ^{-d/2}J_{d/2}\left(
2\pi r\left| n\right| \right) e^{2\pi in\cdot x}.
\end{equation*}

By Lemma \ref{2} applied to the periodization $\nu $ of the measure $\chi
_{\Omega }\mu $, with $p=q=2$,
\begin{gather*}
\left\vert \int_{\Omega }f(x)\overline{d\mu (x)}\right\vert \leq \left\{
\int_{\mathbb{T}^{d}}\left\vert g\ast \nu (x)\right\vert ^{2}dx\right\}
^{1/2} \\
\times \left\{ \int_{\mathbb{T}^{d}}\left\vert \sum_{n\in \mathbb{Z}%
^{d}}\left( r^{d/2}\left\vert n\right\vert ^{-d/2}J_{d/2}\left( 2\pi
r\left\vert n\right\vert \right) \right) ^{-1}\widehat{f}\left( n\right)
e^{2\pi in\cdot x}\right\vert ^{2}dx\right\} ^{1/2}.
\end{gather*}

As in the proof of Lemma \ref{5},
\begin{equation*}
\left\{ \int_{\mathbb{T}^{d}}\left\vert g\ast \nu (x)\right\vert
^{2}dx\right\} ^{1/2}=\left\{ \int_{\mathbb{T}^{d}}\left\vert \sum_{n\in
\mathbb{Z}^{d}}\mu \left( \left( x+n-B\right) \cap \Omega \right)
\right\vert ^{2}dx\right\} ^{1/2}.
\end{equation*}

Moreover, by Parseval equality and Lemma \ref{9}, one can choose $r$ so that
\begin{gather*}
\left\{ \int_{\mathbb{T}^{d}}\left| \sum_{n\in \mathbb{Z}^{d}}\left(
r^{d/2}\left| n\right| ^{-d/2}J_{d/2}\left( 2\pi r\left| n\right| \right)
\right) ^{-1}\widehat{f}\left( n\right) e^{2\pi in\cdot x}\right|
^{2}dx\right\} ^{1/2} \\
=\left\{ \sum_{n\in \mathbb{Z}^{d}}\left| \left( r^{d/2}\left| n\right|
^{-d/2}J_{d/2}\left( 2\pi r\left| n\right| \right) \right) ^{-1}\widehat{f}%
\left( n\right) \right| ^{2}\right\} ^{1/2} \\
\leq c\left\{ \sum_{n\in \mathbb{Z}^{d}}\left( 1+\left| n\right| ^{2}\right)
^{\gamma }\left| \widehat{f}\left( n\right) \right| ^{2}\right\} ^{1/2}.
\end{gather*}

Finally observe that less than $d/2$ square integrable derivatives are not
enough to guarantee the boundedness of a functions. Hence the assumption $%
\gamma >d/2+5/4$ is not too far to be best possible.
\end{proof}

\section{Koksma Hlawka inequalities on manifolds}

The results in the previous section are of local nature and with a change of
variables can be easily transferred from cubes to compact manifolds. Let $%
\mathcal{M}$ be a smooth compact $d$ dimensional manifold with a normalized
measure $dx$. Fix a family of local charts $\left\{ \varphi _{k}\right\}
_{k=1}^{K}$, $\varphi _{k}:\left[ 0,1\right] ^{d}\rightarrow \mathcal{M}$,
and a smooth partition of unity $\left\{ \psi _{k}\right\} _{k=1}^{K}$
subordinate to these charts. The Sobolev spaces $W^{n,p}\left( \mathcal{M}%
\right) $ can be defined by the norms
\begin{equation*}
\left\Vert f\right\Vert _{W^{n,p}\left( \mathcal{M}\right) }=\sum_{1\leq
k\leq K}\sum_{\left\vert \alpha \right\vert \leq n}\left\{ \int_{\left[ 0,1%
\right] ^{d}}\left\vert \frac{\partial ^{\alpha }}{\partial x^{\alpha }}%
\left( \psi _{k}\left( \varphi _{k}(x)\right) f\left( \varphi _{k}(x)\right)
\right) \right\vert ^{p}dx\right\} ^{1/p}.
\end{equation*}

One can define an interval in $\mathcal{M}$ as the image under a local chart
of an interval in $\left[ 0,1\right] ^{d}$, say $U=\varphi _{k}\left(
I\right) $. The discrepancy of a finite Borel measure $\mu $ on $\mathcal{M}$
with respect to the collection $A$ of all intervals in $\mathcal{M}$ is
\begin{equation*}
\mathcal{D}\left( \mu \right) =\sup\limits_{U\in A}\left| \int_{U}d\mu
(y)\right| .
\end{equation*}

\begin{theorem}
\label{11}There exists a constant $c>0$, which depends on the local charts
but not on the function $f$ or the measure $\mu $, such that
\begin{equation*}
\left\vert \int_{\mathcal{M}}f(y)\overline{d\mu \left( y\right) }\right\vert
\leq c\mathcal{D}\left( \mu \right) \left\Vert f\right\Vert _{W^{d,1}\left(
\mathcal{M}\right) }.
\end{equation*}
\end{theorem}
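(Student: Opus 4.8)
The plan is to localize by means of the partition of unity and reduce everything to the torus case already established in Theorem \ref{1}.

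First I would write $f=\sum_{k=1}^{K}\psi_{k}f$, so that
\begin{equation*}
\int_{\mathcal{M}}f(y)\overline{d\mu (y)}=\sum_{k=1}^{K}\int_{\mathcal{M}}\left( \psi _{k}f\right) (y)\overline{d\mu (y)},
\end{equation*}
each summand being the integral of a function supported in the coordinate patch $\varphi _{k}\left( \left[ 0,1\right] ^{d}\right) $. After shrinking the charts slightly, if necessary, so that $\mathrm{supp}\,\psi _{k}$ lies in the image of the open cube, I pull back to $\left[ 0,1\right] ^{d}$: set $F_{k}(x)=\psi _{k}\left( \varphi _{k}(x)\right) f\left( \varphi _{k}(x)\right) $, extended $\mathbb{Z}^{d}$ periodically, which is legitimate since $F_{k}$ vanishes near $\partial \left[ 0,1\right] ^{d}$; and let $\mu _{k}$ be the push-forward under $\varphi _{k}^{-1}$ of the restriction of $\mu $ to $\varphi _{k}\left( \left[ 0,1\right] ^{d}\right) $, viewed as a periodic measure on $\mathbb{T}^{d}$. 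Then $\int_{\mathcal{M}}\left( \psi _{k}f\right) \overline{d\mu }=\int_{\left[ 0,1\right] ^{d}}F_{k}\overline{d\mu _{k}}$, with $F_{k}$ a smooth periodic function and $\mu _{k}$ a finite periodic measure; note that $d$ derivatives in $L^{1}$ force $f$, hence $F_{k}$, to be bounded, so all these integrals converge.

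Next I apply Theorem \ref{1} on $\mathbb{T}^{d}$ with $\Omega =\left[ 0,1\right] ^{d}$, $p=1$, $q=+\infty $, to get $\left\vert \int_{\left[ 0,1\right] ^{d}}F_{k}\overline{d\mu _{k}}\right\vert \leq \mathcal{D}_{\infty }\left( \left[ 0,1\right] ^{d},\mu _{k}\right) \mathcal{V}_{1}(F_{k})$. For the discrepancy factor I invoke the bound recorded immediately after the statement of Theorem \ref{1}:
\begin{equation*}
\mathcal{D}_{\infty }\left( \left[ 0,1\right] ^{d},\mu _{k}\right) \leq 2^{d}\sup_{I\subseteq \left[ 0,1\right] ^{d}}\left\vert \mu _{k}(I)\right\vert =2^{d}\sup_{I\subseteq \left[ 0,1\right] ^{d}}\left\vert \mu \left( \varphi _{k}(I)\right) \right\vert \leq 2^{d}\mathcal{D}(\mu ),
\end{equation*}
since each $\varphi _{k}(I)$ is by definition an interval in $\mathcal{M}$. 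For the variation factor, since every $\alpha \in \left\{ 0,1\right\} ^{d}$ has $\left\vert \alpha \right\vert \leq d$,
\begin{equation*}
\mathcal{V}_{1}(F_{k})=\sum_{\alpha \in \left\{ 0,1\right\} ^{d}}2^{d-\left\vert \alpha \right\vert }\int_{\left[ 0,1\right] ^{d}}\left\vert \left( \frac{\partial }{\partial x}\right) ^{\alpha }F_{k}(x)\right\vert dx\leq 2^{d}\sum_{\left\vert \alpha \right\vert \leq d}\int_{\left[ 0,1\right] ^{d}}\left\vert \left( \frac{\partial }{\partial x}\right) ^{\alpha }F_{k}(x)\right\vert dx.
\end{equation*}

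Finally I sum over $k$. The left-hand sides add up to $\left\vert \int_{\mathcal{M}}f\,\overline{d\mu }\right\vert $, while on the right the quantity $\sum_{k}\sum_{\left\vert \alpha \right\vert \leq d}\int_{\left[ 0,1\right] ^{d}}\left\vert \left( \partial /\partial x\right) ^{\alpha }F_{k}\right\vert dx$ is, by the very definition of the Sobolev norm used in this section, equal to $\left\Vert f\right\Vert _{W^{d,1}\left( \mathcal{M}\right) }$. This gives the inequality with $c=4^{d}$, or a slightly larger constant absorbing the adjustment of the charts and the number of patches, depending only on the chosen atlas and partition of unity and not on $f$ or $\mu $. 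The only genuinely delicate point is this localization step: one must ensure the partition of unity can be taken strictly subordinate, so that each $F_{k}$ extends to a periodic function and the periodization produces no spurious boundary terms; the rest is a direct bookkeeping of the constants in Theorem \ref{1} and in the remark following it.
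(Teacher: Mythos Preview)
Your proof is correct and follows essentially the same route as the paper: localize via the partition of unity (which the paper does in one line, ``It suffices to prove the theorem for functions with support in the image of a single local chart''), pull back to $[0,1]^d$, apply Theorem~\ref{1} with $p=1$, $q=\infty$, then bound the discrepancy factor by $2^d\mathcal{D}(\mu)$ and the variation factor by the Sobolev norm. Your write-up is in fact more careful than the paper's about the periodization and about identifying the sum over $k$ with the Sobolev norm; the only cosmetic slip is the phrase ``the left-hand sides add up to $|\int_{\mathcal M} f\,\overline{d\mu}|$'', which should read ``bound $|\int_{\mathcal M} f\,\overline{d\mu}|$ via the triangle inequality''.
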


\begin{proof}
It suffices to prove the theorem for functions with support in the image of
a single local chart $\varphi :\left[ 0,1\right] ^{d}\rightarrow \mathcal{M}$%
. If the measure $\nu $ is the pull back on $\left[ 0,1\right] ^{d}$ of the
measure $\mu $ on $\mathcal{M}$ then, by Theorem \ref{1},
\begin{gather*}
\left\vert \int_{\mathcal{M}}f(y)\overline{d\mu \left( y\right) }\right\vert
=\left\vert \int_{\left[ 0,1\right] ^{d}}f\left( \varphi \left( x\right)
\right) \overline{d\nu \left( x\right) }\right\vert \\
\leq 2^{d}\left\{ \sup\limits_{I\subseteq \left[ 0,1\right] ^{d}}\left\vert
\int_{\left[ 0,1\right] ^{d}}\chi _{I}\left( x\right) d\nu \left( x\right)
\right\vert \right\} \\
\times \left\{ \sum_{\alpha \in \left\{ 0,1\right\} ^{d}}2^{d-\left\vert
\alpha \right\vert }\int_{\left[ 0,1\right] ^{d}}\left\vert \left( \frac{%
\partial }{\partial x}\right) ^{\alpha }f\left( \varphi \left( x\right)
\right) \right\vert dx\right\} .
\end{gather*}

The first factor is dominated by the discrepancy,
\begin{equation*}
\sup\limits_{I\subseteq \left[ 0,1\right] ^{d}}\left\{ \left| \int_{\left[
0,1\right] ^{d}}\chi _{I}\left( x\right) d\nu \left( x\right) \right|
\right\} \leq \sup\limits_{U\in A}\left\{ \left| \int_{\mathcal{M}}\chi
_{U}(y)d\mu \left( y\right) \right| \right\} .
\end{equation*}

The second factor is dominated by the Sobolev norm,
\begin{equation*}
\sum_{\alpha \in \left\{ 0,1\right\} ^{d}}2^{d-\left| \alpha \right| }\int_{
\left[ 0,1\right] ^{d}}\left| \left( \frac{\partial }{\partial x}\right)
^{\alpha }f\left( \varphi \left( x\right) \right) \right| dx\leq c\left\|
f\right\| _{W^{d,1}\left( \mathcal{M}\right) }.
\end{equation*}
\end{proof}

For example, when the manifold is a 2 dimensional sphere and the local
charts are central projection from a tangent plane to the sphere, the images
of rectangles on the tangent plane are geodesic quadrilaterals on the
sphere. Since a quadrilateral is union of two triangles, everything can be
controlled by the discrepancy with respect to geodesic triangles. Finally,
as in Theorem \ref{10}, one can consider a Koksma Hlawka inequality on the
sphere, with spherical cap discrepancy. The zonal polynomials $Z_{n}\left(
x\cdot y\right) $ on the sphere $\mathcal{S}=\left\{ x\in \mathbb{R}%
^{3}:\;\left| x\right| =1\right\} $ are the reproducing kernels of the
spaces of harmonic polynomials of degree $n$. If $Q_{n}(x)$ is a harmonic
polynomial of degree $n$, then
\begin{equation*}
Q_{n}(x)=\int_{\mathcal{S}}Z_{n}\left( x\cdot y\right) Q_{n}(y)dy.
\end{equation*}

Every distribution $f$ on the sphere has a spherical harmonic expansion
\begin{equation*}
f\left( x\right) =\sum_{n=0}^{+\infty }\int_{\mathcal{S}}Z_{n}\left( x\cdot
y\right) f\left( y\right) dy=\sum_{n=0}^{+\infty }\widehat{f}\left(
n,x\right) .
\end{equation*}

The series converges in the topology of distributions. If $f$ is square
integrable, the series converges in the square norm, and if it is smooth, it
also converges absolutely and uniformly. The following is a spherical analog
of Lemma \ref{2}.

\begin{lemma}
\label{12} Let $f$ be an integrable function and $\mu $ a finite measure on
the sphere. Also let $\varphi (n)$ be a non vanishing complex sequence on $%
\mathbb{N}$, and assume that both $\varphi (n)$ and $1/\varphi (n)$ have
tempered growth. Define
\begin{gather*}
\mathfrak{D}f\left( x\right) =\sum_{n=0}^{+\infty }\varphi \left( n\right)
\widehat{f}\left( n,x\right) , \\
g\left( x\cdot y\right) =\sum_{n=0}^{+\infty }\overline{\varphi \left(
n\right) ^{-1}}Z_{n}\left( x\cdot y\right) .
\end{gather*}
Then
\begin{equation*}
\left| \int_{\mathcal{S}}f(x)\overline{d\mu (x)}\right| \leq \left\{ \int_{%
\mathcal{S}}\left| \int_{\mathcal{S}}g\left( x\cdot y\right) d\mu (y)\right|
^{2}dx\right\} ^{1/2}\left\{ \int_{\mathcal{S}}\left| \mathfrak{D}f\left(
x\right) \right| ^{2}dx\right\} ^{1/2}.
\end{equation*}
\end{lemma}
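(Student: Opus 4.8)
The plan is to mirror the proof of Lemma \ref{2}, replacing the exponential basis $e^{2\pi in\cdot x}$ and the Fourier coefficients $\widehat f(n)$ by the spherical harmonic projections. Write $P_n$ for the orthogonal projection of $L^2(\mathcal S)$ onto the space $\mathcal H_n$ of harmonic polynomials of degree $n$, so that $P_nf(x)=\widehat f(n,x)=\int_{\mathcal S}Z_n(x\cdot y)f(y)\,dy$ and, for the measure, $\widehat\mu(n,x)=\int_{\mathcal S}Z_n(x\cdot y)\,d\mu(y)$, a polynomial in $x$ since $Z_n$ is. I will use three standard facts about the zonal kernels: $Z_n(x\cdot y)$ is real valued; the $P_n$ are mutually orthogonal self adjoint projections with $\sum_nP_n=I$; and the reproducing property gives $\int_{\mathcal S}Z_n(x\cdot y)Z_n(x\cdot z)\,dx=Z_n(y\cdot z)$. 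As in Lemma \ref{2}, the tempered growth of $\varphi$ and of $1/\varphi$ guarantees that $\mathfrak Df$ and $g$, hence also the spherical convolution $g\ast\mu(x)=\int_{\mathcal S}g(x\cdot y)\,d\mu(y)=\sum_{n\ge0}\overline{\varphi(n)^{-1}}\,\widehat\mu(n,x)$, are well defined as tempered distributions.

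First I would establish the Parseval identity
\[
\int_{\mathcal S}f(x)\overline{d\mu(x)}=\sum_{n=0}^{+\infty}\int_{\mathcal S}\widehat f(n,x)\overline{\widehat\mu(n,x)}\,dx .
\]
Indeed, by Fubini and the reproducing property $\int_{\mathcal S}\widehat f(n,x)\overline{\widehat\mu(n,x)}\,dx=\int_{\mathcal S}\int_{\mathcal S}f(y)\,Z_n(y\cdot z)\,\overline{d\mu(z)}\,dy$, and summing in $n$ and using $\sum_n\widehat f(n,\cdot)=f$ gives the claim. Then, exactly as in Lemma \ref{2}, insert the factor $\varphi(n)\varphi(n)^{-1}$ inside each term,
\[
\widehat f(n,x)\overline{\widehat\mu(n,x)}=\bigl(\varphi(n)\widehat f(n,x)\bigr)\,\overline{\bigl(\overline{\varphi(n)^{-1}}\,\widehat\mu(n,x)\bigr)},
\]
and reassemble the series: since $\varphi(n)\widehat f(n,\cdot)$ and $\overline{\varphi(m)^{-1}}\widehat\mu(m,\cdot)$ lie in $\mathcal H_n$ and $\mathcal H_m$ respectively, the cross terms with $n\ne m$ integrate to zero, whence
\[
\int_{\mathcal S}f(x)\overline{d\mu(x)}=\int_{\mathcal S}\mathfrak Df(x)\,\overline{g\ast\mu(x)}\,dx .
\]
The conclusion then follows from the Cauchy--Schwarz inequality applied to this last integral, together with the identity $g\ast\mu(x)=\int_{\mathcal S}g(x\cdot y)\,d\mu(y)$.

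The point that needs care is convergence: when $f$ is merely integrable and $\mu$ merely a finite measure, $\mathfrak Df$ and $g\ast\mu$ are only distributions and the termwise manipulations above are a priori formal. This is not a genuine obstacle, however. The asserted inequality is trivially true unless both $L^2$ norms on the right hand side are finite, and when they are, $\mathfrak Df$ and $g\ast\mu$ belong to $L^2(\mathcal S)$, so the Parseval computation and the interchange of sum and integral are legitimate. Alternatively one may first prove the identity for smooth $f$, where the spherical harmonic expansion converges absolutely and uniformly, and then extend it by the usual density and duality arguments.
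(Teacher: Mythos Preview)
Your argument is correct and follows essentially the same route as the paper: expand $\int_{\mathcal S}f\,\overline{d\mu}$ via the spherical harmonic Parseval identity $\sum_n\int_{\mathcal S}\widehat f(n,x)\overline{\widehat\mu(n,x)}\,dx$, insert the factor $\varphi(n)\varphi(n)^{-1}$, use orthogonality of the $\mathcal H_n$ to reassemble the sums into $\mathfrak Df$ and $g\ast\mu$, and finish with Cauchy--Schwarz. Your additional paragraph on convergence is a welcome clarification that the paper leaves implicit.
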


\begin{proof}
By the spherical harmonic expansions of $f$ and $\mu $,
\begin{gather*}
\left| \int_{\mathcal{S}}f(x)\overline{d\mu (x)}\right| \\
=\left| \sum_{n=0}^{+\infty }\int_{\mathcal{S}}\widehat{f}\left( n,x\right)
\overline{\widehat{\mu }\left( n,x\right) }dx\right| \\
\leq \left\{ \int_{\mathcal{S}}\left| \sum_{n=0}^{+\infty }\varphi \left(
n\right) \widehat{f}\left( n,x\right) \right| ^{2}dx\right\} ^{1/2} \\
\times \left\{ \int_{\mathcal{S}}\left| \int_{\mathcal{S}}\left(
\sum_{n=0}^{+\infty }\overline{\varphi \left( n\right) ^{-1}}Z_{n}\left(
x\cdot y\right) \right) d\mu (y)\right| ^{2}dx\right\} ^{1/2}.
\end{gather*}
\end{proof}

In what follows two specific examples of sequences $\varphi $ and functions $%
g$ are considered. The following is an analog of Lemma \ref{9}, with
Legendre polynomials in place of Bessel functions.

\begin{lemma}
\label{13} (1) Let
\begin{equation*}
\chi _{\left\{ x\cdot y\geq \cos \left( \vartheta \right) \right\} }\left(
x\cdot y\right) =\sum_{n=0}^{+\infty }\overline{\varphi (n)^{-1}}Z_{n}\left(
x\cdot y\right)
\end{equation*}
be the spherical harmonic expansion of the characteristic function of the
spherical cap $\left\{ x\cdot y\geq \cos \left( \vartheta \right) \right\} $
on the two dimensional sphere. Then for every $\gamma >5/2$ and for almost
every $0<\vartheta <\pi $ there exists positive constants $c_{1}$ and $c_{2}$
such that for every positive integer $n$,
\begin{equation*}
c_{1}n^{3/2}\leq \left| \varphi (n)\right| \leq c_{2}n^{\gamma }.
\end{equation*}

(2) Let
\begin{equation*}
\chi _{\left\{ x\cdot y\geq \cos \left( \vartheta \right) \right\} }\left(
x\cdot y\right) +i\chi _{\left\{ x\cdot y\geq \cos \left( 2\vartheta \right)
\right\} }\left( x\cdot y\right) =\sum_{n=0}^{+\infty }\overline{\varphi
(n)^{-1}}Z_{n}\left( x\cdot y\right) .
\end{equation*}%
Then for almost every $0<\vartheta <\pi /2$ there exist positive constants $%
c_{1}$ and $c_{2}$ such that for every positive integer $n$,
\begin{equation*}
c_{1}n^{3/2}\leq \left\vert \varphi (n)\right\vert \leq c_{2}n^{3/2}.
\end{equation*}
\end{lemma}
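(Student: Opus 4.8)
The plan is to compute the moduli $|\varphi(n)|$ explicitly in terms of Legendre polynomials and then to run the argument of Lemma \ref{9}, with Legendre polynomials in the role of Bessel functions. On the two dimensional sphere the zonal reproducing kernel of degree $n$ is $Z_{n}(t)=(2n+1)P_{n}(t)$, $P_{n}$ the Legendre polynomial, so expanding a zonal function into the $Z_{n}$ amounts to a Legendre expansion on $[-1,1]$. From the classical identity $\int_{\cos\vartheta}^{1}P_{n}(t)\,dt=(2n+1)^{-1}(P_{n-1}(\cos\vartheta)-P_{n+1}(\cos\vartheta))$ for $n\geq1$ (a consequence of $\frac{d}{dt}(P_{n+1}-P_{n-1})=(2n+1)P_{n}$) one gets, writing $g_{n}(\vartheta):=P_{n-1}(\cos\vartheta)-P_{n+1}(\cos\vartheta)$, that $|\varphi(n)^{-1}|$ equals a fixed positive constant times $(2n+1)^{-1}|g_{n}(\vartheta)|$ in case (1) and times $(2n+1)^{-1}\sqrt{g_{n}(\vartheta)^{2}+g_{n}(2\vartheta)^{2}}$ in case (2). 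Hence $|\varphi(n)|$ is comparable to $n/|g_{n}(\vartheta)|$ in case (1) and to $n/\sqrt{g_{n}(\vartheta)^{2}+g_{n}(2\vartheta)^{2}}$ in case (2), and in both cases the lower bound $|\varphi(n)|\geq c_{1}n^{3/2}$ follows at once, for every admissible $\vartheta$, from the classical estimate $|P_{n}(\cos\vartheta)|\leq C(n\sin\vartheta)^{-1/2}$, which gives $|g_{n}(\vartheta)|\leq C_{\vartheta}n^{-1/2}$.

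The substance of (1) is the upper bound $|\varphi(n)|\leq c_{2}n^{\gamma}$, that is, the almost everywhere lower bound $|g_{n}(\vartheta)|\geq c\,n^{1-\gamma}$. Fix $[\epsilon,\pi-\epsilon]\subset(0,\pi)$; the key step, the analog of the estimate $\int_{a\sqrt k}^{b\sqrt k}|\sqrt{t}\,J_{\alpha}(t)|^{-\varepsilon}\,dt\leq c\sqrt k$ in Lemma \ref{9}, is
\begin{equation*}
\int_{\epsilon}^{\pi-\epsilon}|g_{n}(\vartheta)|^{-\varepsilon}\,d\vartheta\leq C_{\epsilon}\,n^{\varepsilon/2}\qquad(0<\varepsilon<1),
\end{equation*}
and this is where the real work lies. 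It comes from the classical asymptotic $P_{n}(\cos\vartheta)=(\frac{2}{\pi n\sin\vartheta})^{1/2}\cos((n+\tfrac12)\vartheta-\tfrac{\pi}{4})+O(n^{-3/2})$, uniform on $[\epsilon,\pi-\epsilon]$, which after a sum-to-product identity gives $g_{n}(\vartheta)=(\frac{8\sin\vartheta}{\pi n})^{1/2}\sin((n+\tfrac12)\vartheta-\tfrac{\pi}{4})+O(n^{-3/2})$. Squaring and adding the two asymptotics produces the identity $\frac{g_{n}(\vartheta)^{2}}{4\sin^{2}\vartheta}+P_{n}(\cos\vartheta)^{2}=\frac{2}{\pi n\sin\vartheta}+O(n^{-2})$, from which the crucial dichotomy: wherever $|g_{n}(\vartheta)|$ is smaller than a suitable constant multiple of $n^{-1/2}$ one has $|P_{n}(\cos\vartheta)|\geq c\,n^{-1/2}$, hence $|g_{n}'(\vartheta)|=(2n+1)\sin\vartheta\,|P_{n}(\cos\vartheta)|\geq c\,n^{1/2}$; moreover, by Rolle's theorem, the set where $g_{n}$ is that small has only $O(n)$ connected components (between two of them $g_{n}'$, a multiple of $P_{n}(\cos\vartheta)$, vanishes). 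On each such component $g_{n}$ is strictly monotone with $|g_{n}'|$ at least a constant multiple of $n^{1/2}$, so the change of variable $w=g_{n}(\vartheta)$ bounds its contribution by $C\,n^{-1/2}\int_{|w|<C n^{-1/2}}|w|^{-\varepsilon}\,dw\leq C\,n^{\varepsilon/2-1}$; summing over the $O(n)$ components and bounding $|g_{n}|^{-\varepsilon}$ by $C\,n^{\varepsilon/2}$ on the remainder of $[\epsilon,\pi-\epsilon]$ gives the displayed estimate (the finitely many small $n$ are absorbed into the constant, since $g_{n}$ has only simple zeros: at a zero of $P_{n}$, where $g_{n}'$ vanishes, the three term recurrence forces $P_{n+1}$ and $P_{n-1}$ to have opposite signs, so $g_{n}$ does not vanish there, and $|g_{n}|^{-\varepsilon}$ is locally integrable). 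Granting this, for $\eta>1+\varepsilon/2$ the series $\sum_{n}n^{-\eta}\int_{\epsilon}^{\pi-\epsilon}|g_{n}(\vartheta)|^{-\varepsilon}\,d\vartheta$ converges, so for almost every $\vartheta$ one has $|g_{n}(\vartheta)|\geq c_{\vartheta}n^{-\eta/\varepsilon}$ for all $n$; since $\eta/\varepsilon$ can be made to approach $3/2$ (take $\varepsilon\uparrow1$ and $\eta\downarrow1+\varepsilon/2$), this gives $|\varphi(n)|\leq c_{\vartheta}n^{1+\eta/\varepsilon}\leq c_{2}n^{\gamma}$ for any prescribed $\gamma>5/2$, and arbitrariness of $\epsilon$ makes it hold for almost every $\vartheta\in(0,\pi)$. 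The linear-in-$n$ scaling of the asymptotic is exactly what turns the exponent $5/4$ of Lemma \ref{9} into $5/2$ here.

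For part (2) the matching upper bound $|\varphi(n)|\leq c_{2}n^{3/2}$ needs no metric refinement: it suffices that $g_{n}(\vartheta)^{2}+g_{n}(2\vartheta)^{2}\geq c_{\vartheta}n^{-1}$ for $\vartheta$ in a compact subinterval of $(0,\pi/2)$. Applying the asymptotic for $g_{n}$ at $\vartheta$ and at $2\vartheta$ and writing $\psi=(n+\tfrac12)\vartheta$ one has
\begin{equation*}
g_{n}(\vartheta)^{2}+g_{n}(2\vartheta)^{2}=\frac{8}{\pi n}\Big(\sin\vartheta\,\sin^{2}(\psi-\tfrac{\pi}{4})+\sin 2\vartheta\,\sin^{2}(2\psi-\tfrac{\pi}{4})\Big)+O(n^{-2}),
\end{equation*}
and $\sin^{2}(\psi-\tfrac{\pi}{4})+\sin^{2}(2\psi-\tfrac{\pi}{4})$ is bounded below by a positive absolute constant (if $\sin(\psi-\tfrac{\pi}{4})=0$ then $\sin^{2}(2\psi-\tfrac{\pi}{4})=\tfrac12$), so the parenthesis is $\geq c_{\vartheta}>0$ and the claim follows for $n$ large, hence for all $n$ after adjusting the constant (the finitely many small $n$ being nonzero for $\vartheta$ outside a finite set). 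This yields $|\varphi(n)|\leq c_{2}n^{3/2}$ for almost every $\vartheta\in(0,\pi/2)$. The one genuinely delicate point is the displayed $L^{-\varepsilon}$ bound in part (1); the identification of $|\varphi(n)|$, the easy lower bounds, and the transversality argument just given are all routine.
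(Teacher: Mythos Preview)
Your proof is correct and follows essentially the same route as the paper: identify $\overline{\varphi(n)^{-1}}$ with $(2(2n+1))^{-1}g_n(\vartheta)$ via the Legendre expansion of the cap, read off the lower bound $|\varphi(n)|\geq c_1 n^{3/2}$ from the Laplace--Heine asymptotic, and for the upper bound in (1) run the Lemma~\ref{9} argument (integrate a negative power, sum against $n^{-\eta}$, conclude a.e.). The only packaging difference is that the paper integrates $|\varphi(n)|^{\varepsilon}$ directly and invokes the simplicity of the zeros of $g_n$ together with the asymptotic form to bound $\int_a^b|\varphi(n)|^{\varepsilon}\,d\vartheta$, whereas you integrate $|g_n|^{-\varepsilon}$ and justify the bound by the dichotomy $|g_n|$ small $\Rightarrow |g_n'|\gtrsim n^{1/2}$ plus a Rolle count of components; the two computations are equivalent and lead to the same threshold $\gamma>5/2$. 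Your treatment of (2) via the uniform lower bound for $\sin^{2}(\psi-\pi/4)+\sin^{2}(2\psi-\pi/4)$ is exactly the paper's argument (the paper writes the phase as $+\pi/4$, but the sign is immaterial).
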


\begin{proof}
The zonal polynomials on the two dimensional sphere are multiple of Legendre
polynomials,
\begin{eqnarray*}
Z_{n}\left( x\cdot y\right) &=&\left( 2n+1\right) P_{n}\left( x\cdot
y\right) , \\
P_{n}\left( z\right) &=&\dfrac{d^{n}}{dz^{n}}\dfrac{\left( z^{2}-1\right)
^{n}}{2^{n}n!}.
\end{eqnarray*}

The characteristic function of the spherical cap $\left\{ x\cdot y\geq \cos
\left( \vartheta \right) \right\} $ has the expansion
\begin{gather*}
\chi _{\left\{ x\cdot y\geq \cos \left( \vartheta \right) \right\} }\left(
x\cdot y\right) \\
=\sum_{n=0}^{+\infty }\left( \left( n+1/2\right) \int_{\cos \left( \vartheta
\right) }^{1}P_{n}\left( z\right) dz\right) P_{n}\left( x\cdot y\right) \\
=\dfrac{1-\cos \left( \vartheta \right) }{2}+\sum_{n=1}^{+\infty }\dfrac{%
P_{n-1}\left( \cos \left( \vartheta \right) \right) -P_{n+1}\left( \cos
\left( \vartheta \right) \right) }{2}P_{n}\left( x\cdot y\right) \\
=\dfrac{1-\cos \left( \vartheta \right) }{2}Z_{0}\left( x\cdot y\right)
+\sum_{n=1}^{+\infty }\dfrac{P_{n-1}\left( \cos \left( \vartheta \right)
\right) -P_{n+1}\left( \cos \left( \vartheta \right) \right) }{2\left(
2n+1\right) }Z_{n}\left( x\cdot y\right) .
\end{gather*}

This follows from the identities
\begin{equation*}
Z_{n}\left( x\cdot y\right) =\left( 2n+1\right) P_{n}\left( z\right)
=P_{n+1}^{^{\prime }}\left( z\right) -P_{n-1}^{\prime }\left( z\right) .
\end{equation*}

The Legendre polynomials in $0<a<\vartheta <b<\pi $ have the asymptotic
expansion
\begin{equation*}
P_{n}\left( \cos \left( \vartheta \right) \right) =\sqrt{\dfrac{2}{\pi n\sin
\left( \vartheta \right) }}\cos \left( \left( n+1/2\right) \vartheta +\pi
/4\right) +O\left( n^{-3/2}\right) .
\end{equation*}

Hence
\begin{gather*}
P_{n-1}\left( \cos \left( \vartheta \right) \right) -P_{n+1}\left( \cos
\left( \vartheta \right) \right) \\
=\sqrt{\dfrac{2}{\pi \left( n-1\right) \sin \left( \vartheta \right) }}\cos
\left( \left( n-1/2\right) \vartheta +\pi /4\right) \\
-\sqrt{\dfrac{2}{\pi \left( n+1\right) \sin \left( \vartheta \right) }}\cos
\left( \left( n+3/2\right) \vartheta +\pi /4\right) +O\left( n^{-3/2}\right)
\\
=\sqrt{\dfrac{2}{\pi n\sin \left( \vartheta \right) }}\left( \cos \left(
\left( n-1/2\right) \vartheta +\pi /4\right) -\cos \left( \left(
n+3/2\right) \vartheta +\pi /4\right) \right) +O\left( n^{-3/2}\right) \\
=\sqrt{\dfrac{2}{\pi n\sin \left( \vartheta \right) }}2\sin \left( \vartheta
\right) \sin \left( \left( n+1/2\right) \vartheta +\pi /4\right) +O\left(
n^{-3/2}\right) .
\end{gather*}

It follows from these estimates that $\left| \varphi (n)\right| \geq
cn^{3/2} $. In order to prove a reverse inequality, observe that the above
polynomial vanishes only when $\left( n+1/2\right) \vartheta +\pi /4$ is
close to a multiple of $\pi $, but at these points the derivative is large,
\begin{gather*}
\frac{d}{d\vartheta }\left( P_{n-1}\left( \cos \left( \vartheta \right)
\right) -P_{n+1}\left( \cos \left( \vartheta \right) \right) \right) \\
=\left( 2n+1\right) \sin \left( \vartheta \right) P_{n}\left( \cos \left(
\vartheta \right) \right) \\
=\left( 2n+1\right) \sqrt{\dfrac{2\sin \left( \vartheta \right) }{\pi n}}%
\cos \left( \left( n+1/2\right) \vartheta +\pi /4\right) +O\left(
n^{-1/2}\right) .
\end{gather*}

In particular, if $n$ is large, say $n\geq N$, and $0<a<\vartheta <b<\pi $
then the zeros are simple. This implies that in (1),
\begin{gather*}
\varphi (n)=\left( \dfrac{P_{n-1}\left( \cos \left( \vartheta \right)
\right) -P_{n+1}\left( \cos \left( \vartheta \right) \right) }{2\left(
2n+1\right) }\right) ^{-1} \\
=\dfrac{\sqrt{\pi n}\left( 2n+1\right) }{\sqrt{2\sin \left( \vartheta
\right) }\left( \sin \left( \left( n+1/2\right) \vartheta +\pi /4\right)
+O\left( 1/n\right) \right) }.
\end{gather*}

Finally, as in the proof of Lemma \ref{9}, if $\varepsilon <1$ and $\eta
>1+3\varepsilon /2$ then
\begin{equation*}
\int_{a}^{b}\left( \sum_{n=N}^{+\infty }n^{-\eta }\left| \varphi (n)\right|
^{\varepsilon }\right) d\vartheta <+\infty .
\end{equation*}

If the series $\sum_{n=1}^{+\infty }n^{-\eta }\left| \varphi (n)\right|
^{\varepsilon }$ converges for almost every $\vartheta $, then for almost
every $\vartheta $ there exists $c>0$ such that
\begin{equation*}
\left| \varphi (n)\right| \leq cn^{\eta /\varepsilon }.
\end{equation*}

This proves (1). The proof of (2) is a bit different. Let
\begin{equation*}
\chi _{\left\{ x\cdot y\geq \cos \left( \vartheta \right) \right\} }\left(
x\cdot y\right) +i\chi _{\left\{ x\cdot y\geq \cos \left( 2\vartheta \right)
\right\} }\left( x\cdot y\right) =\sum_{n=0}^{+\infty }\overline{\varphi
(n)^{-1}}Z_{n}\left( x\cdot y\right) .
\end{equation*}

Then, if $n=1,2,3,...$,
\begin{gather*}
\left| \varphi (n)\right| \\
=\left| \dfrac{P_{n-1}\left( \cos \left( \vartheta \right) \right)
-P_{n+1}\left( \cos \left( \vartheta \right) \right) }{2\left( 2n+1\right) }%
+i\dfrac{P_{n-1}\left( \cos \left( 2\vartheta \right) \right) -P_{n+1}\left(
\cos \left( 2\vartheta \right) \right) }{2\left( 2n+1\right) }\right| ^{-1}
\\
=\left( \left| \dfrac{P_{n-1}\left( \cos \left( \vartheta \right) \right)
-P_{n+1}\left( \cos \left( \vartheta \right) \right) }{2\left( 2n+1\right) }%
\right| ^{2}+\left| \dfrac{P_{n-1}\left( \cos \left( 2\vartheta \right)
\right) -P_{n+1}\left( \cos \left( 2\vartheta \right) \right) }{2\left(
2n+1\right) }\right| ^{2}\right) ^{-1/2} \\
=\sqrt{\pi n/2}\left( 2n+1\right) \left( \left| \sin \left( \vartheta
\right) \right| \sin ^{2}\left( \left( n+1/2\right) \vartheta +\pi /4\right)
\right. \\
\left. +\left| \sin \left( 2\vartheta \right) \right| \sin ^{2}\left( \left(
2n+1\right) \vartheta +\pi /4\right) +O\left( 1/n\right) \right) ^{-1/2}.
\end{gather*}

If $0<a<\vartheta <b<\pi /2$ and if $n$ is large, then
\begin{gather*}
\left| \sin \left( \vartheta \right) \right| \sin ^{2}\left( \left(
n+1/2\right) \vartheta +\pi /4\right) +\left| \sin \left( 2\vartheta \right)
\right| \sin ^{2}\left( \left( 2n+1\right) \vartheta +\pi /4\right) +O\left(
1/n\right) \\
\geq c\left( \sin ^{2}\left( \omega +\pi /4\right) +\sin ^{2}\left( 2\omega
+\pi /4\right) \right) +O\left( 1/n\right) \\
\geq c+O\left( 1/n\right) \geq c>0.
\end{gather*}

In particular, there exists $N$ such that for every $n\geq N$ and every $%
0<a<\vartheta <b<\pi /2$, one has $\left| \varphi (n)\right| \leq cn^{3/2}$.
Moreover, the equations $\varphi (n)=0$ for some $n<N$ have a finite number
of solutions $0<a<\vartheta <b<\pi /2$. Hence, if $\vartheta $ is not one of
these solutions, then it satisfies (2).
\end{proof}

\begin{theorem}
\label{14} (1) For every $\gamma >5/2$ and almost every $0<\vartheta <\pi $
there exists a constant $c>0$ with the following property: If $B\left(
x,\vartheta \right) =\left\{ x\cdot y\geq \cos \left( \vartheta \right)
\right\} $ are the spherical caps with center $x$ and radius $\vartheta $,
if $\Omega $ is a Borel set, if $\mu $ is a Borel measure, and if $f$ is a
smooth function in $\mathcal{S}$, then
\begin{gather*}
\left\vert \int_{\Omega }f\left( x\right) \overline{d\mu (x)}\right\vert \\
\leq c\left\{ \int_{\mathcal{S}}\left\vert \mu \left( B\left( x,\vartheta
\right) \cap \Omega \right) \right\vert ^{2}dx\right\} ^{1/2}\left\{
\sum_{n=0}^{+\infty }\left( 1+n^{2}\right) ^{\gamma }\int_{\mathcal{S}%
}\left\vert \widehat{f}\left( n,x\right) \right\vert ^{2}dx\right\} ^{1/2}.
\end{gather*}

(2) For almost every radius $0<\vartheta <\pi /2$ there exists a constant $%
c>0$ with the property that
\begin{gather*}
\left\vert \int_{\Omega }f\left( x\right) \overline{d\mu (x)}\right\vert \\
\leq c\left\{ \sum_{n=0}^{+\infty }\left( 1+n^{2}\right) ^{3/2}\int_{%
\mathcal{S}}\left\vert \widehat{f}\left( n,x\right) \right\vert
^{2}dx\right\} ^{1/2} \\
\times \left( \left\{ \int_{\mathcal{S}}\left\vert \mu \left( B\left(
x,\vartheta \right) \cap \Omega \right) \right\vert ^{2}dx\right\}
^{1/2}+\left\{ \int_{\mathcal{S}}\left\vert \mu \left( B\left( x,2\vartheta
\right) \cap \Omega \right) \right\vert ^{2}dx\right\} ^{1/2}\right) .
\end{gather*}
\end{theorem}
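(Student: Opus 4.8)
The plan is to run, on the sphere, exactly the argument used for Theorems \ref{8} and \ref{10}, with Lemma \ref{12} playing the role of Lemma \ref{2} and Lemma \ref{13} the role of Lemma \ref{9}. Since $\mathcal{S}$ is compact there is no periodization to arrange: one works directly with the finite measure $\nu=\chi_{\Omega}\mu$ on $\mathcal{S}$, so that $\int_{\Omega}f\,\overline{d\mu(x)}=\int_{\mathcal{S}}f\,\overline{d\nu(x)}$.

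For part (1), I would take $\varphi(n)$ to be precisely the sequence attached in Lemma \ref{13}(1) to the spherical cap of radius $\vartheta$, so that the companion function of Lemma \ref{12} is $g(x\cdot y)=\sum_{n=0}^{+\infty}\overline{\varphi(n)^{-1}}Z_{n}(x\cdot y)=\chi_{\{x\cdot y\geq\cos(\vartheta)\}}(x\cdot y)$. Lemma \ref{13}(1) supplies $c_{1}n^{3/2}\leq|\varphi(n)|\leq c_{2}n^{\gamma}$ for $n\geq 1$, which shows that both $\varphi$ and $1/\varphi$ have tempered growth, and $g$, being bounded, lies in $L^{2}(\mathcal{S})$; hence Lemma \ref{12} applies to $\nu$ and gives
$$\left|\int_{\Omega}f\,\overline{d\mu(x)}\right|=\left|\int_{\mathcal{S}}f\,\overline{d\nu(x)}\right|\leq\left\{\int_{\mathcal{S}}\left|\int_{\mathcal{S}}g(x\cdot y)\,d\nu(y)\right|^{2}dx\right\}^{1/2}\left\{\int_{\mathcal{S}}|\mathfrak{D}f(x)|^{2}dx\right\}^{1/2}.$$
In the first factor the inner integral equals $\int_{\mathcal{S}}\chi_{\{x\cdot y\geq\cos(\vartheta)\}}\,d\nu(y)=\nu(B(x,\vartheta))=\mu(B(x,\vartheta)\cap\Omega)$, which is exactly the discrepancy term in the statement. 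For the second factor I would expand $\mathfrak{D}f(x)=\sum_{n}\varphi(n)\widehat{f}(n,x)$ and use the orthogonality of harmonic polynomials of distinct degrees (Parseval on the sphere) to get $\int_{\mathcal{S}}|\mathfrak{D}f(x)|^{2}dx=\sum_{n}|\varphi(n)|^{2}\int_{\mathcal{S}}|\widehat{f}(n,x)|^{2}dx$; bounding $|\varphi(n)|^{2}\leq c(1+n^{2})^{\gamma}$ by Lemma \ref{13}(1), with the single term $n=0$ absorbed into the constant, yields the factor $c\{\sum_{n}(1+n^{2})^{\gamma}\int_{\mathcal{S}}|\widehat{f}(n,x)|^{2}dx\}^{1/2}$ and finishes part (1).

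For part (2) the only change is to take for $\varphi(n)$ the sequence of Lemma \ref{13}(2), so that now $g(x\cdot y)=\chi_{\{x\cdot y\geq\cos(\vartheta)\}}(x\cdot y)+i\,\chi_{\{x\cdot y\geq\cos(2\vartheta)\}}(x\cdot y)$. The same computation makes the inner integral equal to $\mu(B(x,\vartheta)\cap\Omega)+i\,\mu(B(x,2\vartheta)\cap\Omega)$, and the pointwise bound $|a+ib|\leq|a|+|b|$ together with Minkowski's inequality in $L^{2}(\mathcal{S})$ splits the first factor into the sum of the two cap discrepancies appearing in the statement. Since Lemma \ref{13}(2) gives the two-sided bound $c_{1}n^{3/2}\leq|\varphi(n)|\leq c_{2}n^{3/2}$, the second factor is controlled by $c\{\sum_{n}(1+n^{2})^{3/2}\int_{\mathcal{S}}|\widehat{f}(n,x)|^{2}dx\}^{1/2}$, as claimed.

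I do not foresee a genuine obstacle: the structure is identical to that of Theorems \ref{8} and \ref{10}, and the delicate input — the almost-everywhere lower bound on $|\varphi(n)^{-1}|$, equivalently the upper bound $|\varphi(n)|\leq c\,n^{\gamma}$ — has already been isolated in Lemma \ref{13}. The only points requiring a little care are the legitimacy of replacing $\mu$ by the finite measure $\nu=\chi_{\Omega}\mu$ in Lemma \ref{12} (immediate, since $\mathcal{S}$ is compact), and the verification that $\mathfrak{D}f\in L^{2}(\mathcal{S})$ and that $g$ is bounded, both of which follow at once from the smoothness of $f$ and the polynomial bounds on $\varphi$ and $1/\varphi$ supplied by Lemma \ref{13}.
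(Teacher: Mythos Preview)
Your proposal is correct and follows essentially the same argument as the paper: apply Lemma \ref{12} to the restricted measure $\nu=\chi_{\Omega}\mu$, identify $g$ with the characteristic function of one cap (respectively the complex combination of two caps) so that the convolution factor becomes the cap discrepancy, and control the $\mathfrak{D}f$ factor via Parseval and the bounds on $|\varphi(n)|$ from Lemma \ref{13}. Your write-up is in fact more detailed than the paper's own proof, which omits the Parseval step and the explicit verification that $\varphi$ and $1/\varphi$ have tempered growth.
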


\begin{proof}
If $g\left( x\cdot y\right) =\chi _{\left\{ x\cdot y\geq \cos \left(
\vartheta \right) \right\} }\left( x\cdot y\right) $ and if $\nu $ is the
restriction of the measure $\mu $ to the set $\Omega $, then
\begin{equation*}
\left\{ \int_{\mathcal{S}}\left\vert \int_{\mathcal{S}}g\left( x\cdot
y\right) d\nu (y)\right\vert ^{2}dx\right\} ^{1/2}=\left\{ \int_{\mathcal{S}%
}\left\vert \mu \left( B\left( x,\vartheta \right) \cap \Omega \right)
\right\vert ^{2}dx\right\} ^{1/2}.
\end{equation*}

Then (1) follows from Lemma \ref{12} and Lemma \ref{13} (1). If $g\left(
x\cdot y\right) =\chi _{\left\{ x\cdot y\geq \cos \left( \vartheta \right)
\right\} }\left( x\cdot y\right) +i\chi _{\left\{ x\cdot y\geq \cos \left(
2\vartheta \right) \right\} }\left( x\cdot y\right) $ and if $\nu $ is the
restriction of the measure $\mu $ to the set $\Omega $, then
\begin{gather*}
\left\{ \int_{\mathcal{S}}\left\vert \int_{\mathcal{S}}g\left( x\cdot
y\right) d\nu (y)\right\vert ^{2}dx\right\} ^{1/2} \\
\leq \left\{ \int_{\mathcal{S}}\left\vert \int_{\mathcal{S}}\chi _{\left\{
x\cdot y\geq \cos \left( \vartheta \right) \right\} }\left( x\cdot y\right)
d\nu (y)\right\vert ^{2}dx\right\} ^{1/2} \\
+\left\{ \int_{\mathcal{S}}\left\vert \int_{\mathcal{S}}\chi _{\left\{
x\cdot y\geq \cos \left( 2\vartheta \right) \right\} }\left( x\cdot y\right)
d\nu (y)\right\vert ^{2}dx\right\} ^{1/2} \\
=\left\{ \int_{\mathcal{S}}\left\vert \mu \left( B\left( x,\vartheta \right)
\cap \Omega \right) \right\vert ^{2}dx\right\} ^{1/2}+\left\{ \int_{\mathcal{%
S}}\left\vert \mu \left( B\left( x,2\vartheta \right) \cap \Omega \right)
\right\vert ^{2}dx\right\} ^{1/2}.
\end{gather*}

Then, as before, (2) follows from Lemma \ref{12} and Lemma \ref{13} (2).
\end{proof}

Observe that the indices $\gamma >9/4$ in Theorem \ref{10} with $d=2$ and $%
\gamma >5/2$ in Theorem \ref{14} (1) are different. Anyhow, it is likely
that both indices are not best possible. Finally, an analog of Theorem \ref%
{14} (1) holds on spheres of dimension $d>2$ with $\gamma >\left( d+3\right)
/2$.

\section{An application}

In order to test the quality of the above results, we reconsider an example
in \cite{Har10}. Let
\begin{equation*}
f\left( x_{1},x_{2},\ldots ,x_{d}\right) =\frac{1}{x_{1}x_{2}\cdots
x_{d}\left( 1-x_{1}-x_{2}-\ldots -x_{d}\right) }.
\end{equation*}

Also, for $\varepsilon >0$ small, let $\Sigma $ be the simplex
\begin{equation*}
\Sigma =\left\{ \left( x_{1},\ldots ,x_{d}\right) \in \mathbb{R}%
^{d}:x_{1}\geq \ldots \geq x_{d}\geq \varepsilon ,\;1-x_{1}-\ldots
-x_{d}\geq \varepsilon \right\} .
\end{equation*}

One can show that
\begin{equation*}
\sum_{\left\vert \alpha \right\vert \leq d}\int_{\Sigma }\left\vert \left(
\frac{\partial }{\partial x}\right) ^{\alpha }f\left( x\right) \right\vert
dx\leq c\varepsilon ^{-d}.
\end{equation*}

By Corollary \ref{6} there exists a finite sequence $\left\{ x_{j}\right\}
_{j=1}^{N}$ in $\left[ 0,1\right] ^{d}$ such that for all convex sets $%
\Omega $ contained in $\Sigma $,
\begin{equation*}
\left\vert N^{-1}\sum_{j=1}^{N}\left( f\chi _{\Omega }\right) \left(
x_{j}\right) -\int_{\Omega }f\left( x\right) dx\right\vert \leq c\varepsilon
^{-d}N^{-2/(d+1)}\log ^{\gamma }\left( N\right) .
\end{equation*}

This agrees with the result in \cite{Har10}. However, in the case $\Omega
=\Sigma $, Corollary \ref{6} gives the better estimate
\begin{equation*}
\left\vert N^{-1}\sum_{j=1}^{N}\left( f\chi _{\Sigma }\right) \left(
x_{j}\right) -\int_{\Sigma }f\left( x\right) dx\right\vert \leq c\varepsilon
^{-d}N^{-1}\log ^{d}\left( N\right) .
\end{equation*}

\bigskip

2010 Mathematics Subject Classification. Primary 41A55; Secondary 11K38.

Key words and phrases. Koksma Hlawka inequality, Quadrature, discrepancy,
harmonic analysis.

\bigskip

\textbf{Luca Brandolini}

Dipartimento di Ingegneria dell'Informazione e Metodi Matematici,

Universit\`{a} di Bergamo,

Viale Marconi 5, 24044 Dalmine, Bergamo, Italia.

\textit{luca.brandolini@unibg.it}

\textbf{Leonardo Colzani}

Dipartimento di Matematica e Applicazioni, Edificio U5,

Universit\`{a} di Milano Bicocca,

Via R.Cozzi 53, 20125 Milano, Italia.

\textit{leonardo.colzani@unimib.it}

\textbf{Giacomo Gigante}

Dipartimento di Ingegneria dell'Informazione e Metodi Matematici,

Universit\`{a} di Bergamo,

Viale Marconi 5, 24044 Dalmine, Bergamo, Italia.

\textit{giacomo.gigante@unibg.it}

\textbf{Giancarlo Travaglini}

Dipartimento di Statistica, Edificio U7,

Universit\`{a} di Milano-Bicocca,

Via Bicocca degli Arcimboldi 8, 20126 Milano, Italia.

\textit{giancarlo.travaglini@unimib.it}

\newpage

\end{document}